\documentclass[12pt]{amsart}  
\usepackage{amssymb,amsmath,amsthm,amscd}
\usepackage[all]{xy}

\usepackage{graphicx}

\addtolength{\textwidth}{100pt}
\addtolength{\textheight}{1005pt}
\addtolength{\hoffset}{-50pt}
\addtolength{\voffset}{-9pt}
\setlength{\textheight}{21.1cm}


\usepackage{hyperref}

\numberwithin{equation}{section}


\newtheoremstyle{fancy1}{10pt}{10pt}{\itshape}{12pt}{\textsc\bgroup}{.\egroup}{8pt}{
}
\newtheoremstyle{fancy2}{10pt}{10pt}{}{12pt}{\itshape}{.}{8pt}{ }

\theoremstyle{fancy1}

\newtheorem{cor}[equation]{Corollary}
\newtheorem{lem}[equation]{Lemma}
\newtheorem{prop}[equation]{Proposition}

\newtheorem{main}{Theorem}
 
\newtheorem*{main*}{Theorem}

\newtheorem*{cor*}{Corollary}
 
 \newtheorem*{thm*}{Theorem}

\newtheorem*{problem*}{Problem}

\setcounter{table}{\value{equation}}

\theoremstyle{fancy2}
\newtheorem{definition}[equation]{Definition}
\newtheorem{rem}[equation]{Remark}
\newtheorem*{rem*}{Remark}

\newtheorem*{rems*}{Remarks}


\newcommand{\cref}[1]{Corollary~\ref{#1}}
\newcommand{\dref}[1]{Definition~\ref{#1}}

\newcommand{\lref}[1]{Lemma~\ref{#1}}
\newcommand{\pref}[1]{Proposition~\ref{#1}}
\newcommand{\rref}[1]{Remark~\ref{#1}}
\newcommand{\tref}[1]{Theorem~\ref{#1}}
\newcommand{\sref}[1]{Section~\ref{#1}}





\newcommand{\e}{\epsilon}
\newcommand{\ii}{isometric immersion }



\newcommand{\RP}{\mathbb{R\mkern1mu P}}
\newcommand{\CP}{\mathbb{C\mkern1mu P}}

\newcommand{\Sph}{\mathbb{S}}



\newcommand{\R}{{\mathbb{R}}}
\newcommand{\Z}{{\mathbb{Z}}}






\newcommand{\fk}{{\mathfrak{k}}}

\newcommand{\fso}{{\mathfrak{so}}}

\newcommand{\fu}{{\mathfrak{u}}}



\def\con#1=#2(#3){#1 \equiv #2 \bmod{#3}}

\newcommand{\ml}{\langle}                     
\newcommand{\mr}{\rangle}
\newcommand{\la}{\langle}
\newcommand{\ra}{\rangle}
\newcommand{\pepe}{wide}                   




\newcommand{\diag}{\ensuremath{\operatorname{diag}}}

\newcommand{\rank}{\ensuremath{\operatorname{rk}}}

\newcommand{\im}{\ensuremath{\operatorname{Im}}}

 \DeclareMathOperator{\Iso}{Iso}

\DeclareMathOperator{\Id}{Id}
 


\newcommand{\spa}{\mbox{span}}
\newcommand{\rk}{\ensuremath{\operatorname{rank}}}

\newcommand{\abs}[1]{\left\vert#1\right\vert}

\begin{document}
\title{Nonnegatively curved Euclidean submanifolds\\ in codimension two}  

\author{Luis A. Florit}
\address{IMPA: Est. Dona Castorina 110, 22460-320, Rio de Janeiro,
Brazil}
\email{luis@impa.br}
\author{Wolfgang Ziller}
\address{University of Pennsylvania: Philadelphia, PA 19104, USA}
\email{wziller@math.upenn.edu}
\thanks{The first author was supported by CNPq-Brazil,
and the second author  by  a grant from the National Science
Foundation, by IMPA, as well as CAPES.}

\begin{abstract}  
\bigskip
We provide a classification of compact Euclidean submanifolds
$M^n\subset\R^{n+2}$ with nonnegative sectional curvature, for $n\ge 3$.
The classification is in terms of the induced metric (including the
diffeomorphism classification of the manifold), and we study the
structure of the immersions as well. In particular, we provide the
first known example of a nonorientable quotient
$(\Sph^{n-1}\times\Sph^1)/{\Z_2}\subset\R^{n+2}$ with nonnegative
curvature. For the 3-dimensional case, we show that either the universal
cover is isometric to $\Sph^2\times\R$, or $M^3$ is diffeomorphic to a
lens space, and the complement of the (nonempty) set of flat points is
isometric to a twisted cylinder $(N^2\times\R)/\Z$. As a consequence we
conclude that, if the set of flat points is not too big, there exists a
unique flat totally geodesic surface in $ M^3$ whose complement is
the union of one or two twisted cylinders over disks.
\end{abstract}
\maketitle
\bigskip

It is well known that a compact immersed positively curved hypersurface
in Euclidean space is diffeomorphic to a sphere (and is in fact the
boundary of a convex body), the diffeomorphism simply given by the Gauss
map. A deeper result, that such a submanifold still has to be
homotopy equivalent to a sphere in codimension 2, is due to A. Weinstein
\cite{We} and D. Moore \cite{m2}. However, the situation where the
sectional curvature of the submanifold is only nonnegative is more
delicate. Although it is still not known whether there exists an
isometric immersion of $\RP^2$ into $\R^4$ with nonnegative curvature,
the higher dimensional problem was studied in \cite{bm1,bm2}. Using
recent results about the Ricci flow, we strengthen these
by showing the following.

\begin{main}\label{a}
Let $f:M^n\to\R^{n+2}$,  $n\geq 3$, be an \ii of a compact
Riemannian manifold with nonnegative sectional curvature.
Then one of the following holds:
\begin{itemize}
\item[$a)$] $M^n$ is diffeomorphic to $\Sph^n$;
\item[$b)$] $M^n$ is isometric to a product metric on
$\Sph^k\times\Sph^{n-k}$ for some $2\le k\le n-2$, and $f$ is the
product embedding of two convex Euclidean hypersurfaces;
\item[$c)$] $M^n$ is isometric to $(\Sph^{n-1}\times\R)/\Gamma$ with a
product metric on $\Sph^{n-1}\times\R$ and $\Gamma\simeq\Z$ acting
isometrically. As a manifold, $M^n$ is diffeomorphic to
$\Sph^{n-1}\times\Sph^1$ if orientable, or diffeomorphic to the
nonorientable quotient $(\Sph^{n-1}\times\Sph^1)/\Delta\Z_2$ otherwise,
where $\Delta\Z_2$ denotes some diagonal $\Z_2$ action;
\item[$d)$] $M^n$ is diffeomorphic to a 3 dimensional
lens space $\Sph^3/\Z_k$.
\end{itemize}
\end{main}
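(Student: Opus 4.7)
\emph{Proof proposal.} The plan is to pass to the universal cover $\tilde f\colon\tilde M\to\R^{n+2}$ and apply the Cheeger-Gromoll splitting theorem, which writes $\tilde M=N^{n-k}\times\R^k$ isometrically, with $N$ compact, simply connected, and without flat de~Rham factor. The analysis then splits into the three cases $k=0$, $k=1$, and $k\ge 2$, and in each case I would combine Ricci-flow convergence results in the $K\ge 0$ setting with Moore-type splitting theorems for isometric immersions of Riemannian products.

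In the compact case $k=0$, if $\Ric$ is everywhere positive on $N$ then the B\"ohm-Wilking Ricci-flow theorem, together with the strong-maximum-principle refinements in the $K\ge 0$ setting, deforms $N$ to a metric of positive sectional curvature. Combined with Moore's codimension-$2$ extension of Weinstein's theorem \cite{m2} and the Poincar\'e conjecture in all dimensions, this yields $N\simeq\Sph^n$, i.e.\ case~$a)$. If instead $\Ric$ has a null eigendirection somewhere, Hamilton's strong maximum principle propagates it to a parallel subspace and produces a de~Rham decomposition $N=N_1\times N_2$. Moore's splitting theorem for isometric immersions of Riemannian products then forces $\tilde f=f_1\times f_2\colon N_1\times N_2\to\R^{n_1+1}\times\R^{n_2+1}$, with each $f_i$ a compact hypersurface of $K\ge 0$; by Sacksteder each $f_i$ is a convex embedding of $\Sph^{n_i}$, giving case~$b)$.

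For $k\ge 1$ the parallel Euclidean factors of $\tilde M$ trigger Moore's splitting a second time: each $\R$-factor is immersed as an affine line and absorbs one ambient direction. A codimension count --- each compact factor of dimension $\ge 2$ requires extrinsic codimension $\ge 1$ --- forces $k\le 1$, and when $k=1$ gives $\tilde f=g\times\ell\colon N^{n-1}\times\R\to\R^{n+1}\times\R$ with $g$ a compact hypersurface of $K\ge 0$. Sacksteder again identifies $N$ with $\Sph^{n-1}$, and the deck transformation group $\Gamma=\pi_1(M)$ acts isometrically, freely and cocompactly on $\Sph^{n-1}\times\R$. Codimension-$2$ rigidity, together with an analysis of such actions, should force $\Gamma\simeq\Z$; the orientable or nonorientable quotient then depends on how the generator acts on the two factors, yielding case~$c)$.

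It remains to analyze the possible quotients in the compact case $k=0$: free isometric finite-group actions on $\Sph^n$ compatible with a codimension-$2$ immersion should collapse to the trivial one for $n\ge 4$ (keeping case $a)$), while for $n=3$ the free $\Z_k$-actions on $\Sph^3$ produce the lens spaces of case~$d)$; quotients of $\Sph^{k_1}\times\Sph^{k_2}$ respecting the extrinsic product structure of case $b)$ should be trivial. I expect the main obstacle to be precisely the bridge from an intrinsic metric splitting to a genuine extrinsic splitting of $\tilde f$: the Moore-N\"olker type splitting theorems have nontrivial hypotheses on the normal connection and on the adaptedness of the second fundamental form to the product structure, and in codimension $2$ extracting these from the nonnegative curvature hypothesis is where the bulk of the technical work must be concentrated.
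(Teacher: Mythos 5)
Your outline captures the correct opening moves (Cheeger--Gromoll splitting of the universal cover, Ricci-flow rigidity for $\hat R\geq 0$, and the Moore/Alexander--Maltz splitting of product immersions), but it has two genuine gaps precisely at the points you flag as ``where the bulk of the technical work must be concentrated,'' and the hand-waved replacements do not work.

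First, the bound $\ell\le 1$ on the number of flat de~Rham factors of $\tilde M$. Your ``codimension count --- each compact factor of dimension $\ge 2$ requires extrinsic codimension $\ge 1$'' presupposes that the immersion already splits extrinsically as a product, which is exactly what must be proved; it is circular. The paper gets $\ell\le 1$ instead from Bishop's theorem on the holonomy of compact codimension-$2$ Euclidean submanifolds (\pref{bishop}), which shows $hol=\fso(k)\oplus\fso(n-k)$ for some $0\le k\le [n/2]$ (or $\fu(2)$ when $n=4$, excluded by $p_1(\CP^2)=3$). This is a substantive restriction specific to codimension $2$, not a dimension count, and it also eliminates the symmetric-space and K\"ahler cases that a priori survive Wilking's rigidity theorem but which your argument never addresses.

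Second, and more seriously, the reduction of the fundamental group. You write that the actions ``should collapse to the trivial one for $n\ge 4$'' and that codimension-$2$ rigidity ``should force $\Gamma\simeq\Z$,'' but no argument is offered, and none is available from what you have set up. The entire novel technical content of the paper's proof lives here: \lref{l:eq} gives the determinant inequality $|\det(A+tB)|\geq|\det(A-tB)|$ for nonnegative shape operators, from which follows the averaged Morse-theoretic type number inequality $\tau_0+\tau_n\ge\tau_1+\cdots+\tau_{n-1}$; combined with the Morse inequalities this yields $b_1+\cdots+b_{n-1}\le 2$ (\pref{main}), applied to lens-space covers to kill $\pi_1$ for $n\ge 4$ (\cref{sn}), and combined with Morse's cancellation theorem to force $\pi_1$ cyclic when $n=3$ or when $\pi_1$ is infinite (\cref{lens}, \cref{z}). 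Without this chain you cannot rule out $\RP^n\subset\R^{n+2}$ for $n\ge 4$, nor prism/binary-polyhedral quotients of $\Sph^3$, nor non-cyclic infinite $\Gamma$. Also, as a minor point, your appeal to B\"ohm--Wilking from ``$\Ric>0$'' is not literally correct --- that theorem needs $2$-positive curvature operator --- but since Weinstein's observation gives $\hat R\ge 0$ in codimension $2$, the right reference is Wilking's rigidity theorem for nonnegative curvature operator (\pref{rigidity}), which you gesture at but do not invoke precisely.
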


Not only is this statement stronger than previously known results, but
thanks to the use of the Ricci flow its proof becomes substantially
simpler as well. More importantly, in the process we will also provide
new strong restrictions on the structure of these submanifolds. In fact,
the particular nature of the submanifolds in case $(c)$, and even if a
nonorientable immersion exists, was unsettled in the literature.
Furthermore, if case $(d)$ is possible at all was never discussed. The
main purpose of our paper is to address both issues.

\smallskip

Regarding case $(a)$, there are plenty of immersions of $\Sph^n$ with
nonnegative curvature in codimension two. For example, take any compact
convex hypersurface and then a composition with a flat (not necessarily
complete) hypersurface. On the other hand, the submanifolds in case $(b)$
are isometrically rigid.

\bigskip

It was the study of the structure of the immersions that led us to the
following non-orientable example in case $(c)$.

\medskip

{\it Example 1\label{ex1}: The nonorientable quotient
$(\Sph^{n-1}\times\Sph^1)/\Delta\Z_2$ embedded in $\R^{n+2}$.}

\medskip

Consider a flat strip  isometrically immersed in $\R^3$
$$\beta:\R\times(-\epsilon,\epsilon)\to\R^3$$
such that
$ \beta(x_0+1,-x_1) = \beta(x_0,x_1).$
The image is a flat Moebius band immersed or embedded in $\R^3$. Large
families of analytic Moebius bands of this type, together with some
classification results, have been given in \cite{ck,sc,wu}.

The product immersion of $\beta$ with the identity gives
a flat hypersurface
$$
h=\beta\times\Id_{\R^{n-1}}:\R\times(-\epsilon,\epsilon)\times\R^{n-1}
\to\R^3\times\R^{n-1}=\R^{n+2}.
$$
Using a convex hypersurface
$g\colon N^{n-1}\cong\Sph^{n-1}\to (-\epsilon,\epsilon)\times\R^{n-1}$
invariant under the reflection in the first coordinate of $\R^{n-1}$,
we define the cylinder $\Id_\R\times g\colon\R\times
N^{n-1}\to\R\times(-\epsilon,\epsilon)\times\R^{n-1}$.
The composition
$$
f=h\circ(\Id_\R\times g)\colon \R\times N^{n-1}\to \R^{n+2}
$$
then satisfies $f\circ\tau=f$, where $\tau$ is the map
$\tau(x_0,x_1,x_2,\dots,x_n)=(x_0+1,-x_1,x_2,\dots,x_n)$.
Hence the image of $f$ is isometric to the {\it twisted cylinder}
$(\R\times N^{n-1})/\Z$, where $\Z$ is generated by the orientation
reversing isometry of $N^{n-1}$ induced by $\tau$. Thus $h\circ f$
descends to the desired immersion
$$
f':(\Sph^1\times N^{n-1})/\Delta\Z_2\simeq (\R\times N^{n-1})/\Z\to\R^{n+2}.
$$
Observe that we can also choose $\beta$ as a cylinder and obtain
immersions of $(N^{n-1}\times\R)/\Z\simeq \Sph^{n-1}\times\Sph^1$ which
are not products of immersions as in part $(b)$. Actually, they are not
even locally product of immersions, as is the case in Example 2 below.

\medskip

We say that an isometric immersion $f$ is a {\it composition (of $j$)}
when $f=h\circ j$, where $j:M^n\to N_0^{n+1}$ is an isometric immersion
into a (not necessarily complete) flat Euclidean hypersurface
$h\colon N_0^{n+1}\to \R^{n+2}$. We will see that all submanifolds in
\tref{a} $(c)$ are almost everywhere compositions of a cylinder over a
strictly convex Euclidean hypersurface when the Ricci curvature is
2-positive. For complete simply connected nowhere flat nonnegatively
curved manifolds which split off a line, this structure of a composition
was shown in \cite{bdt} under certain regularity assumptions. It is not
clear though what additional restrictions hold, or whether regularity is
necessary, if the immersion $f$ descends to a compact quotient, as
required in \mbox{\tref{a} $(c)$}.

\medskip

The following example, where the induced intrinsic metric is well known
in the theory of graph manifolds, illustrates the structure necessary for
case $(d)$ to occur.

\medskip

{\it Example 2: The switched\, $\Sph^3$ in $\R^5$.}

\medskip

Consider a closed strictly convex hemisphere inside a closed halfspace,
$$
\Sph^2_+\subset \R^3_+=\R^2\times \R_+,
$$
such that the boundary is a closed geodesic in
$\Sph^2_+$ along which the Gauss curvature vanishes to infinite order, and
with image contained in $\R^2\times\{0\}$. Its product with
another $\Sph^1\subset\R^2$ gives the nonnegatively curved three manifold
$$
N^3_+=\Sph^2_+\times\Sph^1\subset\R^5_+=\R^2\times\R_{\geq 0}\times \R^2,
$$
whose boundary is the totally geodesic flat torus
$$
T^2=\Sph^1\times\Sph^1\subset \R^2\times\{0\}\times\R^2=\R^4\subset\R^5.
$$
Now, reversing the role of the $\R^2$ factors we similarly construct
another manifold
$N^3_-=\Sph^1\times\Sph^2_-\subset \R^5_-=\R^2\times \R_{\leq 0}\times
\R^2$ with the same boundary $T^2\subset\R^4$. Thus
$$ M^3=N^3_+\cup N^3_-\subset\R^5$$
gives an embedded Euclidean submanifold with nonnegative
sectional curvature:

\begin{figure}[!ht]
\centering
\includegraphics[width=0.3\textwidth,natwidth=633,natheight=540]{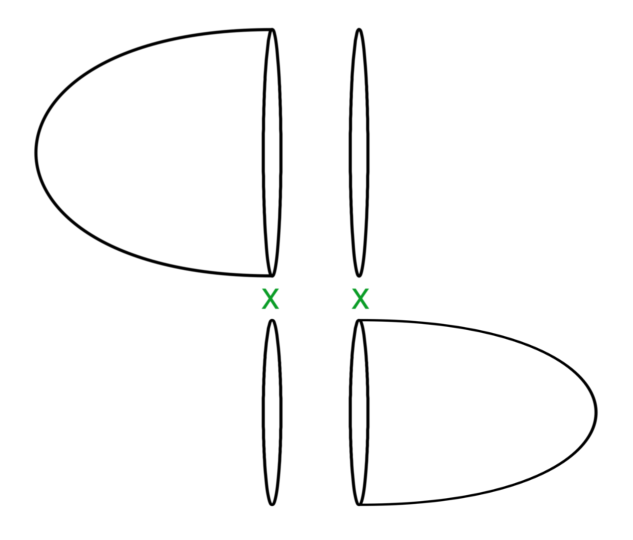}
\centerline{\small A switched $\Sph^3$ isometrically embedded in $\R^5$}
\end{figure}

As a smooth manifold $M^3$ is diffeomorphic to the 3-sphere since this is
the usual description of $\ \Sph^3$ as the union of 2 solid tori. The set
of flat points of $M^3$ contains the totally geodesic flat torus
$T^2\subset M^3$ which disconnects $M^3$. Moreover, each connected
component of $M^3\setminus T^2$ is obtained as a composition of the
cylinder $\R\times \Sph^2_+\subset\R^4$ with a local \ii from $\R^4$ into
$\R^5$. By replacing the torus $T^2$ with a small flat cylinder
$C_\epsilon=\Sph^1\times[0,\epsilon]\times\Sph^1\subset \R^5$, we obtain
$$
\Sph^3\cong M^3_\epsilon=N^3_+\cup C_\e \cup N^3_-\subset\R^5.
$$

The resulting embeddings $M^3_\e\subset\R^5$ for $\e\geq 0$
have thus the following properties:
\newpage
\medskip
\begin{itemize}
\item $M^3_\epsilon$ is compact with nonnegative sectional curvature;
\item The set of flat points contains a totally geodesic flat torus
that disconnects $M^3_\epsilon$;
\item $M^3_\epsilon$ is locally an isometric product of two manifolds
for $\e>0$ (almost everywhere for $\e=0$);
\item The embedding $M^3_\epsilon\subset\R^5$ is locally, yet not
globally, a composition of a cylindrical hypersurface for $\e>0$ (almost
everywhere for $\e=0$);
\item $M^3_\epsilon$ has no points with positive Ricci curvature;
\item $M^3_\epsilon$ has 2-positive Ricci curvature outside the set of
flat points.
\end{itemize}
\medskip
The submanifold $M^3_\epsilon$ can be changed further so that the
boundary of the set of flat points is not regular, and arbitrarily
complicated sets of flat points can be introduced in $\Sph^2_\pm$ as
well. Observe that, by taking the two disks $\Sph^2_\pm$ with strictly
convex boundaries as plane curves, all these properties can be achieved
even with 1-regular full embeddings, i.e., with the dimension of the
space spanned by the second fundamental form having constant dimension 2
everywhere.
\smallskip

The following result recovers some of this structure for any
immersion of a lens space with nonnegative sectional curvature.

\begin{main}\label{3a}
Let $f:M^3=\Sph^3/\Z_k \to \R^5$ with $k>1$ be an isometric immersion with
nonnegative sectional curvature. Then the set $M_0$ of flat points of
$M^3$ is nonempty, and $M^3\setminus M_0$ is isometric to a twisted
cylinder $(N^2\times\R)/\Z$, where $N^2$ is a (not necessarily connected)
surface with positive Gaussian curvature. Moreover, $f$ is a composition
almost everywhere on $M^3\setminus M_0$.
\end{main}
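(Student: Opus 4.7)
The plan is to invoke the structure theorem for codimension-two nonnegatively curved submanifolds with $2$-positive Ricci curvature, which is developed earlier in the paper for case $(c)$ of \tref{a}, in order to produce a local splitting of $f$ along the relative nullity distribution. I would then combine this local picture with topological constraints coming from $\pi_1(M^3)=\Z_k$ to conclude first that $M_0\ne\emptyset$ and second that $M^3\setminus M_0$ is a twisted cylinder.

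For the nonemptiness of $M_0$, I would argue by contradiction. If $M_0=\emptyset$, then at every point the relative nullity of $f$ has dimension at most one, equivalently the Ricci tensor of $M^3$ is $2$-positive everywhere. The structure theorem for case $(c)$ then forces the Riemannian universal cover of $M^3$ to split isometrically as $\widetilde N^2\times\R$ with $\widetilde N^2$ a simply connected strictly convex hypersurface in $\R^4$. But $M^3=\Sph^3/\Z_k$ has universal cover $\Sph^3$, which by the de Rham decomposition theorem does not split nontrivially; hence $M_0\ne\emptyset$.

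For the structure on $U:=M^3\setminus M_0$, the same $2$-positivity of Ricci on $U$ provides a smooth relative nullity line field whose integral curves are complete geodesics of $M^3$ and whose orthogonal $2$-plane field integrates, via the Codazzi equation, into a foliation by totally geodesic surfaces of positive Gaussian curvature. I would then check that the universal cover of $U$ splits isometrically as $\widetilde N^2\times\R$ and that the deck group acts as a product of isometries on the two factors, the $\R$-factor being intrinsically characterized as the relative nullity. Since $\pi_1(M^3)=\Z_k$ is finite cyclic, the image of $\pi_1(U)$ in $\Iso(\R)$ must be generated by a single nonzero translation, yielding $U\cong(N^2\times\R)/\Z$ where the generator of $\Z$ is a translation of $\R$ paired with an isometry of $N^2$. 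The local cylinder-over-convex-surface description of $f$ coming from the structure theorem then supplies the composition of $f$ almost everywhere on $U$.

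The main obstacle is the global identification in the previous paragraph: I would have to verify that the closures of the relative nullity leaves inside $M^3$ are circles, so that the twist of $U$ is realized by a single nonzero translation of $\R$, and that these leaves do not accumulate on $M_0$ in a way that destroys the twisted cylinder structure. This amounts to showing that the foliation of $U$ by nullity geodesics has a Hausdorff leaf space with compact leaves; I would approach it via compactness of $M^3$ combined with continuity and extendibility of the second fundamental form across $\partial U$, using $2$-positivity of Ricci inside $U$ to prevent the nullity direction from degenerating before reaching $M_0$.
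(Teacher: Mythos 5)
Your overall strategy matches the paper's. The paper derives Theorem~B from Corollary~\ref{lens1}, which in turn is declared to follow immediately from part~$(i)$ of Theorem~\ref{b1}: one first observes via Corollary~\ref{mustbewide} that the immersion must be wide, via Corollary~\ref{posric} that $M^3$ has no points of positive Ricci curvature (so $M^3 = K\cup U_1 = F\cup V$), and then applies Theorem~\ref{b1} to split $\tilde V$ globally as $N^{2}\times\R$ with $N^2$ strictly convex. Your contradiction argument for $M_0\neq\emptyset$ — if $M_0=\emptyset$ then $\mu\equiv1$, Ricci is $2$-positive, so $\tilde M^3$ would split as $\Sph^2\times\R$ by Corollary~\ref{2pos}, contradicting $\tilde M^3=\Sph^3$ — is exactly right and is the same de~Rham obstruction the paper implicitly invokes. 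The composition almost everywhere comes from parts~$(ii)$ and~$(iii)$ of Theorem~\ref{b1}, again as you say.

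There is one genuine logical misstep: the assertion that ``since $\pi_1(M^3)=\Z_k$ is finite cyclic, the image of $\pi_1(U)$ in $\Iso(\R)$ must be generated by a single nonzero translation.'' Because $U=V$ is only an open subset of $M^3$, the inclusion $V\hookrightarrow M^3$ gives no control over $\pi_1(V)$ --- it need not be finite, cyclic, or related to $\Z_k$ in any tractable way --- and even granting cyclicity, the image in $\Iso(\R)$ could a priori be trivial or contain a reflection. The correct route (which you gesture at in your last paragraph) is not through $\pi_1(M^3)$ at all: one uses that the splitting $\tilde V = N\times\R$ is canonical, being determined by the nullity line field, so the deck group $\pi_1(V)$ acts preserving both factors; quotienting out the subgroup acting trivially on the $\R$-factor yields $V\cong(N^2\times\R)/H$ with $H$ a discrete subgroup of $\Iso(\R)$, and the content of ``twisted cylinder'' is precisely the translational cyclic part. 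Your acknowledged ``main obstacle'' --- that the nullity leaves must descend compatibly to $V$ --- is real, but you should be aware that the leverage comes from the canonical nature of the splitting of $\tilde V$ and the completeness of the nullity geodesics established in the proof of Theorem~\ref{b1}, not from the topology of $M^3$.
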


\smallskip

The natural intrinsic problem raised by this result is to try to
understand how twisted cylinders can be glued together with flat regions
in order to build a compact manifold. One way is to glue the cylinders
through compact totally geodesic hypersurfaces, as we did for $M^3_\e$ in
Example 2 above.
In \cite{fz} we study this intrinsic structure 
under the additional assumption that
the set of nonflat points $M\setminus M_0$ is dense and locally finite (without assuming that the curvature is necessarily nonnegative). The main result in \cite{fz} is that under this assumption there
exist pairwise disjoint flat totally geodesic compact surfaces in
$M^3$ whose complement is a union of twisted cylinders. We also give examples of metrics on $T^3$ which show that the assumption on the nonflat points is necessary. In the case of nonnegative curvature, Theorem B above, together with Theorem C in \cite{fz}, implies that:

\begin{cor*}\label{main2}
Let $f:M^3=\Sph^3/\Z_k \to \R^5$, $k>1$, be an isometric immersion with
nonnegative sectional curvature, such that $M^3\setminus M_0$ is dense
and its number of connected components is locally finite. Then, either
$M_0$ contains a unique flat totally geodesic torus $T^2$ that
disconnects $M^3$ and $M^3=V_1\sqcup T^2 \sqcup V_2$, or $M^3$ is the
lens space $L(4p,2p+1)$ and $M_0$ contains a unique flat totally geodesic
 Klein bottle $K^2$ satisfying $M^3=V_1\sqcup K^2$. In both
cases, $V_i=(D_i^2\times\R)/\Z$ is a twisted cylinder, where $D_i^2$ is a
disc with nonnegative Gaussian curvature and boundary a closed geodesic.
\end{cor*}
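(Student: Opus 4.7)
The strategy is to combine \tref{3a} with Theorem~C of \cite{fz}. Applying \tref{3a} to $f\colon M^3\to\R^5$ shows that $M_0$ is nonempty and that $M^3\setminus M_0$ is isometric to a single connected twisted cylinder $(N^2\times\R)/\Z$ with $N^2$ of positive Gaussian curvature. Since $M^3\setminus M_0$ is dense with locally finite components by hypothesis, Theorem~C of \cite{fz} provides pairwise disjoint flat totally geodesic compact surfaces $\Sigma_1,\dots,\Sigma_m\subset M_0$ whose complement is a disjoint union of twisted cylinders $V_1,\dots,V_s$. Each $\Sigma_i$, being a closed flat $2$-manifold, is a torus $T^2$ or a Klein bottle $K^2$. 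Each $V_j=(D_j^2\times\R)/\Z$ is open in $M^3$ and its frontier consists of some $\Sigma_i$; this forces $D_j^2$ to be a $2$-disc of nonnegative Gauss curvature with $\partial D_j^2$ a closed geodesic, namely the trace of an adjacent flat totally geodesic $\Sigma_i$ on a fiber.

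The main combinatorial observation is that orientability of $M^3$ forces the $\Z$-action on $D_j^2\times\R$ to be a rotation in the $D_j^2$ factor (a reflection would make $V_j$ non-orientable), so each $\overline{V_j}$ is a compact solid torus with a single $T^2$ boundary component. In $M^3$ this $T^2$ is identified either with one side of a two-sided $\Sigma_i=T^2$ or as the orientation double cover of a one-sided $\Sigma_i=K^2$. Letting $m_T$ (resp.\ $m_K$) denote the number of each type, a boundary count gives $s=2m_T+m_K$. Modelling the situation as a dual graph with the $V_j$ as vertices (each of ``boundary degree'' one), two-sided $T^2$'s as ordinary edges and one-sided $K^2$'s as half-edges, connectedness of $M^3$ leaves only two possibilities: $(m_T,m_K)=(1,0)$, giving $M^3=V_1\sqcup T^2\sqcup V_2$, or $(m_T,m_K)=(0,1)$, giving $M^3=V_1\sqcup K^2$. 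This establishes $m=1$ and the dichotomy.

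In the torus case the first alternative of the statement follows directly. In the Klein bottle case, $M^3$ is the solid torus $\overline{V_1}$ glued to the twisted $I$-bundle over $K^2$ along their common $T^2$ boundary. A Seifert--van~Kampen computation, using $\pi_1(K^2)=\langle a,b\mid abab^{-1}\rangle$ together with the fact that the orientation subgroup $\pi_1(T^2)$ is $\langle a,b^2\rangle$ (since $b$ is the orientation-reversing generator), expresses $\pi_1(M^3)$ as $\langle a,b\mid abab^{-1},\,a^mb^{2n}\rangle$ for integers $m,n$ determined by the gluing. Requiring this group to be the finite cyclic group $\Z_k$ with $k>1$ forces $m$ to be odd, yields $k=4n$, and identifies the resulting lens space as $L(4n,2n+1)$, giving the second alternative with $p=n$. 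I expect the main obstacles to be the careful bookkeeping of the dual-graph argument, and this final concrete identification of the lens space parameters.
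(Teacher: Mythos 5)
Your overall approach matches the paper's: the paper's own proof is essentially a citation, ``Theorem B above, together with Theorem C in \cite{fz}, implies that\dots,'' and you are spelling out the decomposition argument that a reader must run to combine Theorem~\ref{3a} with the surfaces-and-cylinders decomposition of \cite{fz}. The dual-graph bookkeeping (each closure $\overline{V_j}$ a solid torus with a single boundary torus, each two-sided $T^2$ contributing two boundary slots and each one-sided $K^2$ one, hence $s=2m_T+m_K$ with all vertices of degree one, so $(m_T,m_K)\in\{(1,0),(0,1)\}$) is the right combinatorial skeleton.

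There are, however, two places where the argument is asserted rather than proved. First, you write that orientability of $V_j$ and the fact that its frontier is a $\Sigma_i$ ``forces $D_j^2$ to be a $2$-disc,'' but this does not follow automatically: you need that $D_j$ is compact with geodesic boundary, $K\geq 0$ and $K>0$ somewhere (density of $M\setminus M_0$), and then Gauss--Bonnet with $\int_{\partial D_j}\kappa_g=0$ gives $\chi(D_j)>0$, which for a compact surface with nonempty boundary forces a disc; this (or the corresponding statement in \cite{fz}) should be cited explicitly, since without it the boundary count $s=2m_T+m_K$ is unjustified. Second, and more seriously, the Seifert--van Kampen computation only yields $\pi_1(M^3)\cong\Z_{4n}$ (with $m$ odd), and this does \emph{not} identify the homeomorphism type: lens spaces $L(p,q)$ and $L(p,q')$ share the same fundamental group, so the conclusion $M^3\cong L(4n,2n+1)$ does not follow from $\pi_1$ alone. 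You need either the Bredon--Wood classification of Klein bottles embedded in lens spaces (which says exactly that $L(p,q)$ contains an embedded Klein bottle iff $p=4n$ and $q\equiv 2n\pm 1\pmod{4n}$, and that $L(4n,2n+1)=L(4n,2n-1)$), or a direct Heegaard-torus slope computation that keeps track of the gluing matrix, not just the kernel of the map on $H_1$. As written, ``identifies the resulting lens space as $L(4n,2n+1)$'' is a gap.
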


 Yet, despite the rigid structure described
above, we still do not know if a nonnegatively curved lens space in
$\R^5$ actually exists.

\medskip

For the proof of \tref{a} we will need a fundamental property of our
submanifolds that was proved in \cite{Ch} and \cite{m2}. In order to state
it, recall that we have the {\it type numbers} $\tau_k$ of the immersion:
if $h_v=\ml v,\cdot\,\mr:M^n\to\R$ is a height function, and $\mu_k(v)$
the number of critical points of $h_v$ with index $k$, then $\tau_k$ is
the average of $\mu_k(v)$, integrated over the unit sphere in $\R^{n+2}$.
If $M^n$ is compact with nonnegative sectional curvature immersed in
$\R^{n+2}$, then the type numbers satisfy
\begin{equation*}
\tau_0+\tau_n\ge \tau_1 + \dots + \tau_{n-1}.
\end{equation*}
Furthermore, if the inequality is strict, we will see in \sref{RG}
that $M^n$ is diffeomorphic to a sphere. We are thus mainly interested in
the case of equality, and call such immersions {\it wide}. In the
situation of \tref{3a}, we will show that
$\tau_k=\frac{1}{8\pi^2}\int_{N^2}K(x)\kappa_g(x)dx$ for $0\leq k\leq3$,
where $\kappa_g(x)$ denotes the total curvature of the image of the $\R$
factor through $x\in N^2$, and $K$ the Gaussian curvature~of~$N^2$.

For codimension two Euclidean submanifolds with nonnegative curvature,
A. Weinstein \cite{We} showed that at every point there exists a basis
$\{\xi,\eta\}$ of the normal space such that its shape operators $A$ and
$B$ are nonnegative, and hence the curvature operator is nonnegative. The
crucial new property that we will prove is that for a wide immersion we
have in addition that
\begin{equation*}
\text{either} \ \ker A\cap\ker B \neq 0, \ \ \text{or} \
\ \ker A\oplus \ker B = T_p M\ \, \text{with} \ A,B\neq 0,
\end{equation*}

\medskip

\noindent and call the (not necessarily complete) immersions which
satisfy only this property {\it locally wide}. These immersions already
have strong rigidity properties. We examine their behavior on the open
subsets
$U_k = \{p\in M^n: \rk\, A(p) = n-k,\,\rk\, B(p)
= k \ \ \text{and} \ \ker A(p)\cap \ker B(p) = 0\}$,
which fill out the complement of the points of positive relative nullity.
We will show in particular that:

\medskip

\begin{itemize}
\item If $2\leq k \leq [n/2]$, then $f|_{U_k}$ is locally a product
immersion of two strictly convex Euclidean hypersurfaces;
\item If $\pi:\tilde U_1\to U_1$ is the universal cover, then
$(f\circ\pi)$ is a composition of a convex Euclidean hypersurface with
constant index of relative nullity one (see \tref{b1});
\item If $M^n$ is complete and has no points of positive Ricci curvature,
but has 2-positive Ricci curvature, then $\tilde M^n=\Sph^{n-1}\times\R$,
where $g:\Sph^{n-1}\to\R^n$ is a compact strictly convex hypersurface.
\end{itemize}

\section{Preliminaries}

Let $f:M^n\to\R^{n+2}$, $n\geq 3$, be an \ii of a Riemannian manifold
with nonnegative sectional curvature. A. Weinstein observed
that, in this case, the curvature operator $\hat R$ is nonnegative.
Indeed, if $\alpha$ is the second fundamental form, the Gauss equations
imply that $\ml \alpha(X,X),\alpha(Y,Y)\mr \ge 0$ at every $p\in M$.
Hence, in the connected subset $\{\alpha(X,X):X\in T_pM\}\subset
T^\perp_p M$ intersected with the unit circle all points have distance at
most $\pi/2$. This implies that this set lies in the first quadrant with
respect to some orthonormal basis $\{\xi,\eta\}$, while it lies in its
interior if it has positive sectional curvature. We thus have the shape
operators
\begin{equation}\label{AB}
A=A_{\xi} {\rm \ \ \ and \ \ \ } B=A_{\eta}, \ \ \
{\rm \ \ \ with\ \ \ } A\ge 0 {\rm \ \ \ and \ \ \ } B\ge 0.
\end{equation}
since $\ml A_\tau(X),Y\mr=\ml\alpha(X,Y),\tau\mr$.

By the Gauss equations we have $\hat{R}=\Lambda^2A+\Lambda^2B$, and hence
$\hat{R}\ge 0$ (and $\hat{R}>0$ if the sectional curvature is positive).
Furthermore, the cone of shape operators which are positive semidefinite
contains the first quadrant, while the cone of shape operators which are
negative semidefinite contains the third.

We will see that the interesting immersions are quite rigid as a
consequence of the following purely algebraic key result which will be
applied to the shape operators in \eqref{AB}.

\begin{lem}\label{l:eq}
Let $A$ and $B$ be nonnegative semidefinite self--adjoint operators
on an Euclidean space $V$. Then,
$|\det\left(A+tB\right)|\geq|\det\left(A-tB\right)|,\ \ \forall\, t>0.$
Moreover, equality holds for all $t>0$ if and only if either
$\ker A \cap \ker B \neq 0$, or $\ker A \oplus \ker B=V$.
\end{lem}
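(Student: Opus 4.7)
The plan is to first dispatch the trivial case $\ker A \cap \ker B \neq 0$, where any vector in this intersection lies in the kernel of both $A+tB$ and $A-tB$; so both determinants vanish and equality is automatic. For the remaining case $\ker A \cap \ker B = 0$, the main device is a congruence reduction via $P := (A+B)^{1/2}$. Since $A+B$ is then positive definite, $P$ is invertible and self-adjoint, and the operators $C := P^{-1}AP^{-1}$, $D := P^{-1}BP^{-1}$ are nonnegative self-adjoint with $C + D = I$. In particular $D = I - C$ commutes with $C$, so they diagonalize simultaneously in an orthonormal basis $\{e_i\}$ with nonnegative eigenvalues $c_i, d_i$ satisfying $c_i + d_i = 1$.

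With this normal form in hand the inequality becomes factor-by-factor: I would simply write
\[
\det(A \pm tB) \;=\; \det(P)^2 \prod_i (c_i \pm t\, d_i),
\]
which in particular shows $\det(A+tB)\geq 0$, making the absolute value on the left automatic. The elementary bound $(c_i+td_i)^2-(c_i-td_i)^2 = 4t\,c_i d_i \geq 0$ gives $c_i + t d_i \geq |c_i - t d_i|$, and multiplying over $i$ yields the claimed inequality.

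For the equality analysis, note that equality in a single factor at a given $t>0$ forces $c_id_i = 0$. Requiring equality for \emph{all} $t>0$ therefore forces $c_id_i = 0$ for every $i$, and combined with $c_i + d_i = 1$ this makes each $c_i$ lie in $\{0,1\}$; so $C$ and $D$ are complementary orthogonal projections onto subspaces $W_1, W_0$ with $V = W_0 \oplus W_1$. Translating back through the invertible map $P$ gives $\ker A = P^{-1}(W_0)$ and $\ker B = P^{-1}(W_1)$, hence the required decomposition $\ker A \oplus \ker B = V$. The converse direction (either kernel hypothesis implies equality for all $t$) is immediate: in the first case both determinants vanish identically, and in the second the above analysis reversed shows that $c_id_i = 0$ everywhere.

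The whole argument rests on the standard trick of simultaneously \emph{congruently} diagonalizing two nonnegative symmetric operators via $P = (A+B)^{1/2}$, which reduces the lemma to the scalar inequality $c+td \geq |c-td|$. I do not anticipate a serious obstacle; the only mild subtlety is bookkeeping the equality condition through the congruence in order to recover the stated alternative on $\ker A$ and $\ker B$.
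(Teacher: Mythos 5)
Your argument is correct but takes a genuinely different route from the paper's. The paper first treats the case $B$ nonsingular via the characteristic polynomial of $B^{-1/2}AB^{-1/2}$, then handles the general case by blocking $A$ with respect to $\ker B \oplus (\ker B)^\perp$ and passing to the Schur complement $\hat A = E - D^t C^{-1}D$, which reduces back to the nonsingular case. You instead observe that $\ker A \cap \ker B = 0$ already forces $A+B > 0$ (if $(A+B)v=0$ then $\langle Av,v\rangle=\langle Bv,v\rangle=0$, hence $v\in\ker A\cap\ker B$), and then use the single invertible self-adjoint $P=(A+B)^{1/2}$ to congruence-reduce $(A,B)$ to a commuting pair $(C,I-C)$ of nonnegative operators. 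The lemma then collapses to the scalar inequality $c+td \geq |c-td|$ for $c,d\geq 0$, and the equality case becomes visibly equivalent to $C$ being an orthogonal projection, which $P^{-1}$ transports back to $\ker A \oplus \ker B = V$. This is more symmetric in $A$ and $B$, dispenses with the Schur complement, and makes the equality analysis transparent. One small step worth spelling out: to pass from equality of the determinants to equality in each factor $c_i+td_i = |c_i-td_i|$, note that $c_i+td_i>0$ for every $i$ and $t>0$ (since $c_i+d_i=1$), so any strict factor inequality would force strict inequality of the products; in particular your method shows that equality at a \emph{single} $t>0$ already forces the kernel alternative, slightly more than the lemma asks.
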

\proof First, observe that the lemma follows easily if one of the
operators is nonsingular. Indeed, if, say, $B$ is invertible,
$\det\left(A+tB\right) = \det(B)\phi(t)$, where
$\phi$ is the characteristic polynomial of the positive semidefinite
operator $B^{-1/2}AB^{-1/2}$. Since all the coefficients of $\phi$ are
nonnegative the inequality holds, with equality if and only if
$A=0$. Therefore, we can assume that both $A$ and $B$ are singular.

Let $K=\ker B$, and decompose $A$ as its block endomorphisms
$(C,D,D^t,E)$, according to the orthogonal decomposition
$K\oplus K^\perp$,
$$
A(u,v) = (Cu+Dv,D^tu+Ev), \ \ \ \forall u\in K, v\in K^\perp.
$$
Since $A\geq 0$, we have $C\geq0$ and $E\geq 0$.

For $u\in\ker C$ we get
$0\leq\la A(su,v), (su,v)\ra = 2s\la D^tu,v\ra+\la Ev,v\ra$,
for all $s\in\R$. Hence, $D^tu=0$ and $u \in K \cap \ker A$. Therefore
either $C>0$ or the kernels intersect and equality holds.
So let us then assume further that $C>0$ and $K\cap \ker A = 0$.

If we call $\hat B$ the symmetric operator $B$ restricted and projected
to $K^\perp$, then $\hat B>0$ and, for all $u\in K, v\in K^\perp$,
$$(A+tB)(u,v) = (Cu+Dv,D^tu+(E+t\hat B)v).$$
Thus,
$$\det(A+tB)=\det(C)\det\left(E+t\hat B-D^tC^{-1}D\right)
= \det(C)\det(\hat A+t\hat B),$$
where $\hat A$ is the symmetric operator on $K^\perp$ given by
$\hat A=E-D^tC^{-1}D$. Now, observe that~$\hat A$ is nonnegative since
$$\la(E-D^tC^{-1}D)v,v\ra = \la Ev,v\ra - \la C^{-1}Dv,Dv\ra
= \la A(-C^{-1}Dv,v),(-C^{-1}Dv,v)\ra \geq 0.$$
This gives the inequality in the lemma as in the nonsingular case.
Moreover, this computation also shows that $v\in\ker \hat A$ if and only
if $(-C^{-1}Dv,v)\in\ker A$. Thus, $\det(A+tB)$ is again, up to a
positive constant, the characteristic polynomial of the positive
semidefinite operator $\hat B^{-1/2}\hat A\hat B^{-1/2}$, which is an odd
or even function if and only if $\hat A=0$. In this case,
$\dim \ker A = \dim K^\perp$, as claimed.

The converse of the equality case is straightforward.
\qed

\medskip

Since  $\hat{R}\ge 0$ we can
use the following rigidity result, where $hol$ denotes the Lie algebra of
the holonomy group of $M$ (see Theorem 1.13 in \cite{Wi}):

\begin{prop}\label{rigidity}
If $M$ is a compact and simply connected Riemannian
manifold with $\hat{R}\ge 0$, then one of the following holds:
\begin{itemize}
\item $hol=\fso(n)$ and $M$ is diffeomorphic to $\Sph^n$, with the Ricci
flow converging to a metric with constant positive curvature;
\item $hol=\fu(n)$ and $M$ is diffeomorphic to $\CP^n$, with the Ricci
flow converging to the Fubini Study metric;
\item $M$ is isometric to an irreducible symmetric space $G/K$ with
$hol=\fk$;
\item Arbitrary isometric products of the above cases.
\end{itemize}
\end{prop}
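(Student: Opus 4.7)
The plan is to derive this classification by combining the de Rham decomposition, Berger's holonomy theorem, and the Ricci-flow analysis of Wilking \cite{Wi} (itself built on B\"ohm--Wilking in the strictly positive case).

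First, since $M$ is compact and simply connected, the de Rham decomposition theorem writes $M$ isometrically as a product of irreducible Riemannian factors. The hypothesis $\hat R\ge 0$ is preserved under Riemannian products (the curvature operator of a product is the block sum), so the fourth bullet reduces the statement to the irreducible case, and I will assume $M$ irreducible from now on.

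Next, if $M$ is locally symmetric then, being compact and simply connected, it is a globally symmetric space $G/K$ with $hol=\fk$, giving the third bullet. Otherwise, Berger's holonomy classification gives $hol\in\{\fso(n),\fu(n),\fsu(n),\fsp(n)\oplus\fsp(1),\fsp(n),\fg_2,\fspin(7)\}$. The four entries $\fsu(n),\fsp(n),\fg_2,\fspin(7)$ all force $M$ to be Ricci--flat; combined with $\hat R\ge 0$, a direct spectral calculation (writing $\Ric(X,X)=\sum_{j}\la\hat R(X\wedge e_j),X\wedge e_j\ra$ as a sum of nonnegative terms) forces every sectional curvature to vanish, so $M$ is flat, contradicting irreducibility in positive dimension. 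The quaternion-K\"ahler entry $\fsp(n)\oplus\fsp(1)$ under $\hat R\ge 0$ is absorbed into the symmetric case via Wilking's invariant-cone argument, since the corresponding invariant curvature cone inside his scheme coincides with the symmetric locus.

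It then remains to handle $hol=\fso(n)$ and $hol=\fu(n)$. For $hol=\fso(n)$, I would invoke B\"ohm--Wilking: the normalized Ricci flow starting from any metric with $\hat R\ge 0$ (and generic holonomy) converges smoothly, after rescaling, to a metric of constant positive sectional curvature, so $M\simeq\Sph^n$. For $hol=\fu(n)$, $M$ is K\"ahler and Wilking's K\"ahler refinement of the same program gives convergence of the normalized flow to the Fubini--Study metric on $\CP^n$. The technical heart of both steps -- constructing convex cones in the space of algebraic curvature operators that are invariant under Hamilton's ODE $\partial_t\hat R=Q(\hat R)$, and then upgrading to a PDE convergence statement via Hamilton's maximum principle and a pinching estimate -- is the main obstacle. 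A secondary subtlety is the careful verification that no non-symmetric quaternion-K\"ahler metric with $\hat R\ge 0$ leaks through in the holonomy step above.
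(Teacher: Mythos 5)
The paper does not actually prove this proposition; it simply cites it as Theorem~1.13 of Wilking's survey \cite{Wi}. So there is no ``paper's own proof'' to match -- the fair comparison is against the proof in that reference, and your sketch identifies the right tools but differs in a significant structural way and leaves the genuinely hard steps unaddressed, as you yourself flag.

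Two concrete issues. First, your logical order (de~Rham $\to$ Berger $\to$ Ricci flow case by case) is opposite to Wilking's, which runs the Ricci flow \emph{first}: after a short time the image of $\hat R$ is a parallel Lie subalgebra of $\Lambda^2TM\cong\fso(n)$ (Hamilton's strong maximum principle, sharpened by Brendle--Schoen), and this subalgebra contains the holonomy algebra; the holonomy reduction and the de~Rham splitting are thus \emph{outputs} of the flow, not inputs. This matters for your $\fso(n)$ step: B\"ohm--Wilking's convergence theorem needs 2-positivity of $\hat R$, not merely $\hat R\ge 0$, and with $hol=\fso(n)$ you must first use the strong maximum principle to see that $\ker\hat R$ is parallel and hence trivial after a short time, and only then invoke B\"ohm--Wilking. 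Your sketch skips this upgrade and applies B\"ohm--Wilking directly to $\hat R\ge 0$ with ``generic holonomy,'' which is not a statement of that theorem. Second, in the K\"ahler case ($hol=\fu(n)$, $n\ge 2$) the operator $\hat R$ can \emph{never} become strictly positive -- its image always lies in $\fu(n)\subsetneq\fso(2n)$ -- so the K\"ahler branch cannot be reduced to B\"ohm--Wilking at all; it requires the K\"ahler-specific classification (Mok's theorem on nonnegative bisectional curvature, or its Ricci-flow descendants). Your ``K\"ahler refinement'' comment gestures at this but does not supply it. The quaternion-K\"ahler gap you flag is real as well; in Wilking's framework it is disposed of because the parallel image of $\hat R$ forces symmetry there, but that is its own argument. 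In short: right ingredients, plausible plan, but the parts you label as the ``main obstacle'' are precisely where the theorem lives, and the order in which you invoke holonomy versus the flow is backwards relative to the argument the paper is implicitly relying on.
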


Notice in particular that if $hol=\fso(n)$ in the above, then $M$ is
diffeomorphic to a sphere since, if the Lie algebra $\fk$ of $K$ for a
symmetric space $G/K$ satisfies $\fk=\fso(n)$, then $G/K$ is in fact
isometric to a round $n$-sphere.

\medskip

We will also need the next result due to Bishop (\cite{Bi}) about the
holonomy group of a general compact Euclidean submanifold in codimension
two.
\begin{prop}\label{bishop}
If $M^n$ is a compact submanifold immersed in $\R^{n+2}$, then either
$n=4$ and $hol=\mathfrak{u}(2)$, or $hol=\fso(k)\times\fso(n-k)$ for some
$0\leq k \leq [n/2]$.
\end{prop}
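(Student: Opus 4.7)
The plan is to combine the de~Rham decomposition theorem, Berger's holonomy classification, and the special algebraic form of the Gauss equation in codimension two.

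First, I would lift to the universal cover and apply de~Rham's decomposition to write $\tilde M\cong\R^d\times\tilde M_1\times\cdots\times\tilde M_r$ with each $\tilde M_i$ irreducible of dimension $n_i\ge 2$; correspondingly $hol=\fh_1\oplus\cdots\oplus\fh_r$ with each $\fh_i\subset\fso(n_i)$ acting irreducibly. Applying Moore's theorem on isometric immersions of Riemannian products, the immersion $f$ factors as $f_0\times f_1\times\cdots\times f_r$ with codimensions summing to~$2$. Combined with Tompkins' theorem that a compact flat $T^d$ requires codimension at least $d$ in Euclidean space, and the fact that each non-flat irreducible factor needs positive codimension, this forces $d+r\le 2$, hence in particular $r\le 2$.

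Next, I would exploit the Gauss equation in codimension two,
\[
R(X\wedge Y) = AX\wedge AY + BX\wedge BY,
\]
with $A=A_\xi,\,B=A_\eta$ the symmetric shape operators for a local orthonormal normal frame $\{\xi,\eta\}$. By Ambrose--Singer, $hol$ is spanned by parallel transports of such Gauss operators, so on each irreducible factor $\tilde M_i$ the curvature lies in a very restricted subspace of $\fh_i$-invariant algebraic curvature tensors. Comparing with Berger's list, only $\fh_i=\fso(n_i)$ or $\fh_i=\fu(n_i/2)$ can occur; the remaining candidates ($\fsu(n_i/2)$, $\fsp(n_i/4)$, $\fsp(n_i/4)\oplus\fsp(1)$, $G_2$, $\Spin(7)$, and all non-trivial symmetric holonomies) have invariant curvature spaces of dimension too large to be generated by only two symmetric operators, or require parallel algebraic structures incompatible with a rank-two normal bundle.

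Finally, should some $\fh_i=\fu(n_i/2)$ occur, I would use that $\tilde M_i$ is Kähler and hence cannot be a Euclidean hypersurface unless flat; so the codimension of $f_i$ is at least $2$, and together with the codimension bound from the first step this forces $r=1$ and $d=0$. A direct algebraic analysis of how the parallel complex structure $J$ interacts with the Gauss form then restricts $n_i=4$, giving the exceptional case $hol=\fu(2)$ with $n=4$. Otherwise all $\fh_i=\fso(n_i)$ and, combined with $d+r\le 2$, we conclude $hol=\fso(k)\times\fso(n-k)$ for some $0\le k\le [n/2]$. The most delicate step will be the Kähler rigidity: showing that only when $n_i=4$ can a parallel complex structure coexist with a codimension-two Gauss curvature tensor on a compact manifold.
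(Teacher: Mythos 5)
The paper does not prove this proposition; it is quoted from Bishop [Bi], so there is no in-text argument to compare against, and I am evaluating your proposal on its own. Your plan (de Rham decomposition, a Moore-type splitting of the immersion, Berger's classification, then a Kähler rigidity step) is a reasonable outline, but each stage as written has a genuine gap, and the hardest one is left entirely open.

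The splitting step is the weakest. Moore's theorem on isometric immersions of Riemannian products (and the Alexander--Maltz refinement used elsewhere in this paper) requires more than a de Rham product: one needs either a compact factor or a non-degeneracy condition ensuring that the second fundamental form is \emph{adapted} to the product. The de Rham factors of $\tilde M$ are noncompact, and since the proposition carries no curvature hypothesis at all, nothing stops a factor from being flat on large regions where the adaptedness argument fails. Tompkins' theorem likewise concerns \emph{compact} flat manifolds; to invoke it you would first have to show that the Euclidean de Rham factor $\R^d$ descends to a compact flat submanifold of $M$, which is a nontrivial claim about how $\pi_1(M)$ interacts with the splitting. So the bound $d+r\le 2$ is asserted, not derived. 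The Berger step has a different problem: you rule out $\fsu$, $\fsp$, $\fsp\oplus\fsp(1)$, $G_2$, $\Spin(7)$ and the symmetric holonomies by a pointwise dimension count on the curvature, but by Ambrose--Singer the holonomy algebra is generated by \emph{parallel transports} of curvature operators from all points of $M$, which can span far more than the curvature space at one point; the pointwise form $R=\Lambda^2A+\Lambda^2B$ does not directly bound the holonomy without further work. Finally, and most importantly, the restriction to $n=4$ in the Kähler case is where the substance of Bishop's theorem actually sits, and you flag it as "the most delicate step" without offering any argument. Until that is filled in, the proposal is an outline rather than a proof.
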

Here $k=0$ corresponds to $hol=\fso(n)$ and $k=1$ to $hol=\fso(n-1)$,
i.e., the manifold splits a flat factor locally. In particular, there can
be at most a one dimensional flat factor.

\smallskip

\section{Rigidity}\label{RG}  

\medskip

In this section we provide the proof of \tref{a}. In the process, we
reprove, simplify and strengthen known results about compact Euclidean
submanifolds in codimension 2 with nonnegative sectional curvature by
using \pref{rigidity}. Along the way, we will
see how the proof implies further rigidity of the immersion.

\bigskip
Let $f$ and $M^n$ be as in \tref{a}. Since $M^n$ is compact with
nonnegative sectional curvature, a well-known consequence of the
Cheeger--Gromoll splitting  theorem, see  \cite{CG} Theorem C,
implies that its universal cover
$\tilde{M}^n$ splits isometrically as
\begin{equation}\label{soul}
\tilde{M}^n=\R^\ell\times N^{n-\ell},
\end{equation}
where $N^{n-\ell}$ is compact and simply connected.
Since the Lie algebra of the holonomy group of $M$ and $\tilde M$
coincide, \pref{bishop} implies that $\ell\le 1$.

Now observe that in our context $n=4$ and
$hol=u(2)$ is not possible in \pref{bishop} since then the universal
cover would be compact, and by \pref{rigidity} diffeomorphic to $\CP^2$.
But $\CP^2$ does not admit an immersion in $\R^6$ as follows, e.g.,
from $p_1(\CP^2)=3$, the product formula for Pontrjagin classes, and
$p_1(E)=e(E)^2$ for the normal 2 plane bundle~$E$ of the immersion.

Therefore, the existence of an \ii implies that
\begin{equation}\label{k}
hol=\fso(k)\oplus\fso(n-k),
\end{equation}
for\ some $0\leq k \leq [n/2]$.
We can now combine \eqref{soul}, \eqref{k} and \pref{rigidity}.
First, it follows that $\ell\le 1$ and $\ell=1$ if and only if $k=1$.

If $\ell=k=0$, then $\tilde{M}^n$ is compact and hence diffeomorphic to
$\Sph^n$, while if $\ell=0$ and $k\ge 2$, then $\tilde{M}^n$ is
diffeomorphic to $\Sph^k\times\Sph^{n-k}$. In the latter case, by \cite{am} the immersion must be a product immersion of convex
hypersurfaces and, in particular, $M^n$ itself is simply connected and
diffeomorphic to $\Sph^k\times\Sph^{n-k}$.

If $\ell=k=1$, and hence $hol=\fso(n-1)$, \eqref{soul} implies that
$\tilde{M}^n$ splits isometrically $\tilde{M}^n=N^{n-1}\times \R$, with
$N^{n-1}$ compact and simply connected. But then
$hol(N^{n-1})=\fso(n-1)$, and again by \pref{rigidity} we get that
$N^{n-1}$ is diffeomorphic to $\Sph^{n-1}$. Thus
$M^n=(\Sph^{n-1}\times\R)/\Gamma$ isometrically, with $\Gamma$ a discrete
group acting by isometries. We will see in \cref{z} below that
$\Gamma=\Z$ and that $M^n$ is diffeomorphic to either
$\Sph^{n-1}\times\Sph^1$ if $M^n$ is orientable, or to the nonorientable
quotient $(\Sph^{n-1}\times\Sph^1)/\Delta\Z_2$ otherwise.

\bigskip

To conclude the proof of \tref{a}, it remains to study the possibilities
for the fundamental group when $\ell=k=0$ and $\tilde{M}^n=\Sph^n$, or
when $\ell=k=1$ and $\tilde{M}^n=\Sph^{n-1}\times\R$ for some metrics of
nonnegative curvature on the spheres. In order to do this, we first
recall the relationship, due to Kuiper, between height functions and
shape operators of compact Euclidean submanifolds.

Following \cite{m2}, for each $v\in\Sph^{n+1}$ consider the height
function $h_v=\ml v,\cdot\,\mr:M^n\to\R$ and define
$$
\mu_k(v)=\text{number of critical points of $h_v$ with index $k$},
$$
and
$$
\tau_k=\frac{1}{vol(\Sph^{n+1})}\int_{\Sph^{n+1}}\mu_k(v)dv.
$$
Since on a set of full measure the height functions are nondegenerate,
we can apply the usual Morse inequalities for Morse functions. After
integrating them we get
$$
\tau_k\ge b_k
$$
and
$$
\sum_{k=0}^{\ell}(-1)^{\ell-k}\,\tau_k\ge
\sum_{k=0}^{\ell}(-1)^{\ell-k}\, b_k,\ \ \ \forall\ \ell = 1,\dots n,
$$
where $b_k=\dim H_k(M,F)$ for some field $F$.

Clearly, $p$ is a critical point of $h_v$ if and only if $v=\beta$ for some
$(p,\beta)\in T_1^\perp M$, where $T_1^\perp M$ is the normal bundle of the
immersion. Moreover, $p$ is a nondegenerate critical point of $h_\beta$ if and
only if $A_\beta$ is nonsingular, and the index of the critical point is equal
to $ind(A_\beta)$, the number of negative eigenvalues of $A_\beta$.
We now use the Gauss map $G\colon T^\perp_1M\to \Sph^{n+1}$ with
$G(p,\beta)=\beta$, which for compact submanifolds is surjective. Since
$G_{*(p,\beta)}=\diag(A_\beta,\Id)$, we have that $p$ is a nondegenerate
critical point of $h_\beta$ if and only if $(p,\beta)$ is a regular point of
$G$. Thus if $C\subset
\Sph^{n+1}$ is the set of regular values of $G$ (an open and dense set of
full measure), then $h_v$ with $v\in C$ is a Morse function and hence
$\mu_k$ is constant on every connected component of $C$. Furthermore, for
$v\in C$, the set $G^{-1}(v)$ contains $\mu_k(v)$ many points of index $k$.
Thus by change of variables,
\begin{equation}\label{form}
\tau_k=\int_C\mu_k(v)dv=\frac{1}{vol(\Sph^{n+1})}
\int_{ind(A_\beta)=k }\abs{\det A_\beta}dvol_{T_1^\perp M}.
\end{equation}
Since $\mu_k(v)=\mu_{n-k}(-v)$, we also have $\tau_{n-k}=\tau_k$ for all
$k$.

\medskip

We proceed by applying \lref{l:eq} to the shape operators $A,B$
in \eqref{AB} in order to estimate $\abs{\det A_\beta}$ for
$(p,\beta)\in T_1^\perp M$, by first integrating over a circle in the
normal space $T_p^\perp M$, and then over $p\in M$. For a fixed normal
space $T_p^\perp M$, \lref{l:eq} implies that
\begin{equation}\label{detcomp}
|\det\left(\cos\theta A+\sin\theta B\right)|\geq|\det\left(\cos\theta
A-\sin\theta B\right)| \ \ \text{ for all }\ 0 \le \theta\le
\frac{\pi}{2}.
\end{equation}
Furthermore, if a regular value $\beta=\cos(\theta)\xi+\sin(\theta)\eta$
lies in the first quadrant, then \mbox{$ind(A_\beta)=0$}, and if it lies
in the third quadrant then $ind(A_\beta)=n$, whereas all saddle points
(and possibly some local maxima and minima as well) lie in the second and
fourth quadrants. Thus \eqref{detcomp}, together with
$\tau_{n-k}=\tau_k$, implies that
\begin{equation}\label{chen}
\tau_0+\tau_n\ge \tau_1 + \dots + \tau_{n-1}.
\end{equation}

As we will see, in the case of strict inequality $M$ is diffeomorphic to
a sphere. We are thus interested from now on in the equality case, where
one expects a certain amount of rigidity. This motivates our next key
definition.

\bigskip

{\it Definition.} Given a compact Riemannian manifold $M^n$ with
nonnegative sectional curvature, an isometric immersion
$f:M^n\to\R^{n+2}$ is said to be {\it \pepe} if
$\tau_0+\tau_n= \tau_1 + \dots + \tau_{n-1}$.

\bigskip

We can now use \lref{l:eq} to express the rigidity in terms of the shape
operators $A$ and $B$ in \eqref{AB}.

\begin{prop}\label{wide}
If $f \colon M^n\to \R^{n+2}$ is wide, then at each point $p\in M^n$ we
have that either $\ker A\oplus \ker B = T_p M$ with $A\neq0$ and
$B\neq0$, or $\ker A\cap\ker B \neq 0$.
\end{prop}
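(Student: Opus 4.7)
The plan is to unpack the inequality \eqref{chen} into a pointwise condition at every $p\in M$ and then invoke the equality case of \lref{l:eq}. Parametrize the unit circle in $T_p^\perp M$ by $\beta(\theta)=\cos\theta\,\xi+\sin\theta\,\eta$. Since $A,B\ge 0$, the first quadrant $\theta\in(0,\pi/2)$ gives $A_\beta\ge 0$ (so $ind(A_\beta)=0$ at every regular $\beta$), the third quadrant gives $ind(A_\beta)=n$, and the indices $1,\dots,n-1$ can only arise in the second and fourth quadrants. Using $A_{-\beta}=-A_\beta$ and \eqref{form}, a direct bookkeeping shows
\[
\tau_0+\tau_n-\sum_{k=1}^{n-1}\tau_k \;=\; 2(F-S)+2X,
\]
where $F=\int_M\!\int_0^{\pi/2}|\det(\cos\theta A+\sin\theta B)|\,d\theta\,dp$, $S=\int_M\!\int_0^{\pi/2}|\det(\cos\theta A-\sin\theta B)|\,d\theta\,dp$ (obtained from the second quadrant after $\theta\mapsto\pi-\theta$), and $X\ge 0$ is the integral of $|\det A_\beta|$ over those $(p,\theta)$ in the second or fourth quadrant with $ind(A_\beta)\in\{0,n\}$. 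By \lref{l:eq}, $F\ge S$.

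Wideness forces the left-hand side to vanish, so $F=S$ and $X=0$ separately. From $F=S$ and the pointwise inequality, $|\det(\cos\theta A+\sin\theta B)|=|\det(\cos\theta A-\sin\theta B)|$ for a.e.\ $(p,\theta)$; fixing such $p$ and setting $t=\tan\theta$, the difference of the squares of the two sides is a polynomial in $t$ vanishing on a set of positive measure, hence identically. By the equality part of \lref{l:eq}, at a.e.\ $p$ either (i) $\ker A\cap\ker B\ne 0$ or (ii) $\ker A\oplus\ker B=T_pM$. Since (i) or (ii) is characterized by the polynomial identity $(\det(A+tB)-\det(A-tB))(\det(A+tB)+\det(A-tB))\equiv 0$ in $t$, it is a closed condition on $(A,B)$, and therefore holds at every $p\in M$ by density.

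The remaining step, and the main obstacle, is to rule out the degenerate sub-cases of (ii) in which $A(p_0)=0$ or $B(p_0)=0$, which are allowed by \lref{l:eq} but excluded by the proposition. Suppose $A(p_0)=0$. Then (i) would require $\ker B(p_0)\ne 0$, while (ii) forces $\ker B(p_0)=0$; hence we are in (ii) with $B(p_0)$ invertible. By continuity $B$ is invertible on a neighborhood $U\ni p_0$; on $U$, (i) never holds, so (ii) forces $\ker A(q)=T_qM$ at every $q\in U$, i.e., $A\equiv 0$ on $U$. But then for $q\in U$ and $\theta\in(\pi/2,\pi)$, $A_\beta=\sin\theta\,B(q)$ is positive definite, so
\[
X\;\ge\;\int_U\!\int_{\pi/2}^{\pi}\sin^n\theta\,|\det B(q)|\,d\theta\,dq\;>\;0,
\]
contradicting $X=0$. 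The case $B(p_0)=0$ is symmetric.

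The hard part is precisely this last paragraph: the equality conclusion of \lref{l:eq} admits exactly the two degenerate sub-situations that the proposition forbids, and eliminating them requires using the auxiliary global consequence $X=0$ of wideness (and not merely $F=S$) together with continuity of $A,B$ to propagate the degeneracy to an open set where it yields a strictly positive contribution to $X$.
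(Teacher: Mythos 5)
Your proof is correct and follows essentially the same route as the paper: establish the pointwise dichotomy via the equality case of Lemma~\ref{l:eq}, then rule out $A=0$ (or $B=0$) by propagating the degeneracy to a neighborhood (using that $B$ nonsingular there forces $A\equiv 0$ by the dichotomy) and showing this produces an extra positive contribution to $\tau_0+\tau_n-\sum\tau_k$, contradicting wideness. Your explicit bookkeeping $\tau_0+\tau_n-\sum_{k=1}^{n-1}\tau_k=2(F-S)+2X$ and the separate use of $F=S$ and $X=0$ makes rigorous what the paper phrases more tersely as ``this neighborhood will not have any saddle points, and we would have a strict inequality in \eqref{chen}.''
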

\begin{proof}
Equality in \eqref{chen} holds if and only if we have equality in
\eqref{detcomp} at every point and for every angle $\theta$. Thus
\lref{l:eq} implies that either $\ker A\oplus \ker B = T_p M$ or
$\ker A\cap\ker B \neq 0$. Furthermore, if $A=0$ at some point, then $B$
is nonsingular, and hence in a neighborhood of the point as well. But
then this neighborhood will not have any saddle points, and we would have
a strict inequality in \eqref{chen}.
\end{proof}

Notice that, along the (open) set where the first case holds, the
orthonormal basis $\{\xi,\eta\}$ in \eqref{AB}, and hence $A$ and $B$,
are unique and thus smooth. On the other hand, no such information can be
obtained in the second case.

\medskip

Altogether, we have the following  improvement of a result
in \cite{m2}, where we obtain additional information on the structure
of the immersion.

\begin{prop}\label{main}
If $M^n$ is compact, nonnegatively curved and immersed in Euclidean space
in codimension $2$, then we have for any coefficient field $F$ over which
$M^n$ is orientable that
$$
b_1+b_2+\dots  +b_{n-1}\le 2,
$$
with equality if and only if
$$
\tau_k=b_k \ \ \forall\ 2\leq k\leq n-2, \ \ \tau_1-\tau_0= b_1-1
\text{ and }\ \tau_0+\tau_n= \tau_1 + \dots + \tau_{n-1}.
$$
In particular, equality implies that the immersion is \pepe.
\end{prop}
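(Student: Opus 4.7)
The proof plan is to combine three ingredients already in hand: the Chen--Kuiper inequality \eqref{chen}, the symmetry $\tau_{n-k}=\tau_k$, and the (weak and strong) Morse inequalities stated in this section, together with Poincar\'e duality $b_k=b_{n-k}$ over $F$, which is available because $M^n$ is assumed orientable over~$F$. Since $M^n$ is connected and compact, we also have $b_0=b_n=1$.

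First I would apply the strong Morse inequality at level $\ell=1$ to $h_v$, giving
$$
\tau_1-\tau_0\ge b_1-b_0=b_1-1.
$$
By the symmetries $\tau_{n-k}=\tau_k$ and $b_{n-k}=b_k$, this same inequality, applied equivalently to $-h_v$, reads
$$
\tau_{n-1}-\tau_n\ge b_{n-1}-1.
$$
For the middle indices $2\le k\le n-2$ I would use only the weak Morse inequality $\tau_k\ge b_k$. Adding all $n-1$ of these estimates yields
$$
\tau_1+\tau_2+\dots+\tau_{n-1}\;\ge\;(\tau_0+\tau_n)\;+\;\sum_{k=1}^{n-1}b_k\;-\;2.
$$
Setting $S=b_1+\dots+b_{n-1}$ and combining with \eqref{chen} gives
$$
\tau_0+\tau_n\;\ge\;(\tau_0+\tau_n)+S-2,
$$
so $S\le 2$, which is the asserted bound.

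For the equality case, $S=2$ forces every inequality used above to be saturated, namely $\tau_1-\tau_0=b_1-1$, $\tau_k=b_k$ for $2\le k\le n-2$, and $\tau_0+\tau_n=\tau_1+\dots+\tau_{n-1}$; the last of these is precisely the definition of wideness. Conversely, under these three equalities one recovers $S=2$ by simply re-summing, which gives the ``if and only if''.

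I do not expect a serious obstacle: the whole argument is an arithmetic combination of tools already assembled in the excerpt. The only delicate point is the selection of Morse inequalities. Applying the strong Morse inequality at $\ell=1$ (and its image under the index symmetry $k\leftrightarrow n-k$) is what contributes the decisive constant $-2$ that sharpens the conclusion from a bound in terms of $\tau_0+\tau_n$ to the absolute constant $2$; using the strong inequalities at higher $\ell$ would overdetermine the system and break the clean cancellation, while using only the weak inequalities $\tau_k\ge b_k$ throughout would lose exactly this $-2$.
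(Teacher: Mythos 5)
Your argument is identical to the paper's: both apply the strong Morse inequality at level $\ell=1$ (and, via the symmetries $\tau_k=\tau_{n-k}$, $b_k=b_{n-k}$, at level $n-1$), the weak Morse inequalities $\tau_k\ge b_k$ for $2\le k\le n-2$, and then combine with \eqref{chen} to extract the constant $2$. The reasoning for the equality case is also the same — saturation of every inequality used.
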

\begin{proof}
By the Morse inequalities $\tau_k\ge b_k$ for $k=2,\dots n-2$ and
$\tau_1-\tau_0\ge b_1-b_0$, i.e. $b_1\le \tau_1-\tau_0+1$. If $M^n$ is
compact and orientable mod $F$, then $b_k=b_{n-k}$ and, in general,
$\tau_k=\tau_{n-k}$. Hence,
$$
b_1+(b_2+\dots +b_{n-2}) +b_{n-1}\le \tau_1-\tau_0+1 +(\tau_2+\dots
+\tau_{n-2}) + \tau_{n-1}-\tau_n+1,
$$
which together with \eqref{chen} yields
$$
b_1+b_2+\dots+b_{n-1}\le\tau_1\dots+\tau_{n-1}-(\tau_0+\tau_n)+2\le 2,
$$
with equality as claimed.
\end{proof}

So far we have shown that there are three situations: either $\tilde M^n
\cong \Sph^n$ diffeomorphically, or $\tilde M^n$ splits isometrically as
either $\tilde M^n\equiv\Sph^{n-1}\times\R$ or
$M^n\equiv\Sph^k\times\Sph^{n-k}$, for certain metrics of nonnegative
curvature on the spheres. As a corollary of \pref{main}, we conclude that
in the first case $M$ itself is a sphere when $n\geq 4$:

\begin{cor}\label{sn}
If $n\ge 4$ and $\pi_1(M^n)$ is finite, then $M^n$ is simply connected.
In particular, either $M^n$ is diffeomorphic to $\Sph^n$, or isometric
to a Riemannian product \mbox{$\Sph^k\times\Sph^{n-k}$}, $2\leq k\leq
n-2$, for certain metrics of nonnegative sectional curvature on the
spheres. In the latter case, the immersion is a product of two convex
hypersurfaces, and thus it is wide.
\end{cor}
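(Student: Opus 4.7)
The plan is to isolate the nontrivial content. Since $\pi_1(M^n)$ is finite, the splitting $\tilde M^n = \Sph^{n-1}\times\R$ from the $\ell=k=1$ analysis cannot occur (the quotient would not be compact), so only $\tilde M^n\cong\Sph^n$ diffeomorphically, or $\tilde M^n=\Sph^k\times\Sph^{n-k}$ isometrically with $2\le k\le n-2$, remain. In the product case, the passage preceding the corollary (using \cite{am}) already identifies $M^n$ with $\tilde M^n$ and $f$ with a product of two convex Euclidean hypersurfaces; wideness of such a product immersion follows at once since for a closed convex hypersurface $\Sph^{k}\to\R^{k+1}$ all intermediate type numbers vanish and $\tau_0=\tau_k$, and wideness is multiplicative over products. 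What remains is to show that if $\tilde M^n\cong\Sph^n$ and $\Gamma:=\pi_1(M^n)$ is finite, then $\Gamma$ is trivial.

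I would suppose $|\Gamma|>1$ and contradict the Betti bound $\sum_{k=1}^{n-1}b_k(M^n;F)\le 2$ of \pref{main}, for $F$ a field over which $M^n$ is orientable, splitting on the parity of $n$. If $n$ is even, the rational Euler characteristic gives $\chi(M^n)=\chi(\Sph^n)/|\Gamma|=2/|\Gamma|\in\Z_{>0}$, so $|\Gamma|=2$ and $M^n\simeq\RP^n$. Taking $F=\Z_2$ and using $b_k(\RP^n;\Z_2)=1$ for $0\le k\le n$ then gives $\sum_{k=1}^{n-1}b_k=n-1\ge 3$, contradicting the bound.

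If $n$ is odd, hence $n\ge 5$, every nontrivial $\gamma\in\Gamma$ acts on $\Sph^n$ without fixed points and so with Lefschetz degree $(-1)^{n+1}=+1$, whence $M^n$ is $\Z$-orientable and \pref{main} applies with any coefficient field. For a prime $p$ dividing $|\Gamma|$, I would run the Serre spectral sequence of the classifying fibration $\Sph^n\to M^n\to B\Gamma$: only the rows $q=0,n$ are nonzero, with trivial $\Gamma$-action, the only possibly nontrivial differential is $d_{n+1}$, and in the range $0\le k\le n-1$ one reads off $H^k(M^n;\Z_p)\cong H^k(B\Gamma;\Z_p)$. Since $\Gamma$ acts freely on a sphere it has $p$-periodic cohomology, and a case analysis across the classification of finite groups with periodic cohomology (cyclic, generalized quaternion, binary polyhedral, and products with coprime cyclic factors) lets one choose $p\mid|\Gamma|$ so that $\sum_{k=1}^{n-1}\dim H^k(B\Gamma;\Z_p)\ge 3$ whenever $n\ge 5$: for cyclic $\Gamma=\Z_m$ the mod-$p$ cohomology of $B\Gamma$ has dimension $1$ in every degree, giving sum $n-1\ge 4$, while for a non-cyclic periodic group a carefully chosen odd prime $p\mid|\Gamma|$ keeps enough Weyl-invariant classes in the truncated range. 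This contradicts the bound and forces $\Gamma=1$.

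The main expected obstacle is this last step: tracking $H^*(B\Gamma;\Z_p)$ uniformly across the classification of periodic groups, so that the nonvanishing mod-$p$ cohomology of $B\Gamma$ actually lands inside degrees $[1,n-1]$. A cleaner argument bypassing the case analysis—perhaps exploiting the wideness granted by equality in \pref{main} together with the algebraic rigidity of \pref{wide} to directly obstruct nontrivial free quotient actions on $\Sph^n$ with nonnegative curvature in codimension two—would be preferable, but I do not see a short path of this form using only the tools developed so far.
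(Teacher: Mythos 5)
Your even case is sound and in fact close to the paper's own argument, but your odd case has a genuine gap that the paper sidesteps with a much simpler device: rather than analyzing the full group $\Gamma=\pi_1(M^n)$, take by Cauchy's theorem any element $g\in\Gamma$ of prime order $q$ and pass to the \emph{intermediate cover} $X=\Sph^n/\Z_q \to M^n$ corresponding to $\langle g\rangle$. This $X$ is a compact manifold carrying the pulled-back isometric immersion into $\R^{n+2}$ with nonnegative curvature, so \pref{main} applies to $X$, not to $M^n$ directly. For a free $\Z_q$-action on a homotopy $n$-sphere one has $b_k(X;\Z_q)=1$ for every $0\le k\le n$ (the map $X\to B\Z_q$ is $n$-connected on mod-$q$ cohomology, and $X$ is $\Z_q$-orientable: orientable when $q$ is odd, automatically when $q=2$), giving $\sum_{k=1}^{n-1}b_k(X;\Z_q)=n-1\ge 3$ for $n\ge4$, a contradiction. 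This kills both parities at once with no Lefschetz or Euler-characteristic detour, no spectral sequence bookkeeping, and no appeal to the classification of groups with periodic cohomology.

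Concretely, the step where your proposal fails is the one you yourself flag: for $n$ odd and $\Gamma$ non-cyclic you would need to locate enough classes of $H^*(B\Gamma;\Z_p)$ in degrees $1,\dots,n-1$, and you do not establish that this can always be done. The reduction to $\Z_q$-covers makes that problem disappear entirely, because \pref{main} is a statement about \emph{any} nonnegatively curved compact codimension-two submanifold, so one is free to apply it to the most convenient cover. The product case and the wideness claim are handled correctly in your proposal and match the paper.
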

\begin{proof}
Assume $M^n$ is not simply connected, and let $g\in\pi_1(M^n)$ be an
element of prime order $q$, so $\ml g\mr\cong\Z_q\subseteq \pi_1(M)$.
Then, the lens space $\Sph^n/\Z_q$ is a finite cover of $M^n$ and with
the covering metric it has an \ii into $\R^{n+2}$. But
$b_k(\Sph^n/\Z_q,\Z_q)=1$ for all $k$ which contradicts \pref{main} if
$n\geq 4$. Notice that if $q>2$ the lens space is orientable, and if
$q=2$ it is orientable mod~$2$.

In the case where $M^n$ is isometric to a Riemannian product
\mbox{$\Sph^k\times\Sph^{n-k}$}, \cite{am} implies that the immersion is
a product of two convex hypersurfaces. One easily sees that in this case
the immersion is wide.
\end{proof}

For the three dimensional case we have the following.

\begin{cor}\label{lens}
If $n=3$ and $\pi_1(M^3)\neq 0$ is finite, then $M^3$ is diffeomorphic to
a lens space $\Sph^3/\Z_k$, and the immersion is wide with
$\tau_0=\tau_1=\tau_2=\tau_3$. Furthermore, the height functions satisfy
$\mu_0=\mu_1$ as well as $\mu_{2}=\mu_3$.
\end{cor}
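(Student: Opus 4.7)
I would begin by invoking the classification of the universal cover already carried out in this section: if $\pi_1(M^3)$ is finite then $\tilde M^3$ is compact, which rules out the splitting $\tilde M^3\equiv\Sph^{n-1}\times\R$, while the product case $M^n\equiv\Sph^k\times\Sph^{n-k}$ requires $2\leq k\leq n-2$ and is vacuous for $n=3$. The only remaining option is $\tilde M^3\cong\Sph^3$, so $M^3=\Sph^3/\Gamma$ for some nontrivial finite group $\Gamma\subset\SO(4)$ acting freely.

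Next I would pick a prime $q$ dividing $|\Gamma|$, together with an element of order $q$, and form the $q$-fold covering $L_q:=\Sph^3/\Z_q\to M^3$. The lens space $L_q$ inherits a nonnegatively curved isometric immersion into $\R^5$; it is orientable modulo $q$ and has $b_k(L_q,\Z_q)=1$ for $0\leq k\leq 3$, so $b_1+b_2=2$ saturates the bound in \pref{main}. Equality forces the intermediate Morse estimates to be equalities, in particular $\tilde\tau_1-\tilde\tau_0=b_1-b_0=0$ and $\tilde\tau_2-\tilde\tau_3=b_2-b_3=0$; combined with $\tilde\tau_k=\tilde\tau_{n-k}$ this gives $\tilde\tau_0=\tilde\tau_1=\tilde\tau_2=\tilde\tau_3$, and \pref{main} ensures the immersion of $L_q$ is wide.

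I would then descend these conclusions to $M^3$. Since each critical point of the height function $h_v:M^3\to\R$ lifts to exactly $q$ critical points of $h_v\circ\pi$ on $L_q$ with the same Morse index, one has $\tilde\mu_k(v)=q\,\mu_k(v)$ and $\tilde\tau_k=q\,\tau_k$. Hence $f$ itself is wide with $\tau_0=\tau_1=\tau_2=\tau_3$. For the pointwise equalities of Morse indices, the strong Morse inequality on $L_q$ with $\Z_q$-coefficients gives $\tilde\mu_1(v)\geq\tilde\mu_0(v)$ for every regular value $v$; the vanishing integral $\tilde\tau_1-\tilde\tau_0=0$, combined with the local constancy of $\tilde\mu_k$ on each connected component of the set of regular values of the Gauss map, forces $\tilde\mu_1\equiv\tilde\mu_0$ identically, whence $\mu_1=\mu_0$. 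The symmetry $\mu_k(v)=\mu_{n-k}(-v)$ then yields $\mu_2=\mu_3$.

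The hardest part is showing that $\Gamma$ itself is cyclic, so that $M^3$ is actually a lens space rather than some other spherical space form. Applying \pref{main} to every intermediate cover $\Sph^3/\Gamma_0\to M^3$ (each orientable since $\Gamma_0\subset\SO(4)$) yields $b_1(\Sph^3/\Gamma_0,F)\leq 1$ over every coefficient field $F$, so the abelianization of every subgroup of $\Gamma$ is cyclic. In particular $\Gamma$ cannot contain the quaternion group $Q_8$ (whose abelianization is $\Z_2\oplus\Z_2$); combined with Smith theory (which forbids $\Z_p\oplus\Z_p$ from acting freely on $\Sph^3$) this rules out all binary polyhedral groups as well as the binary dihedral groups $Q_{4n}$ with $n$ even. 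Eliminating the remaining metacyclic possibilities, by invoking the classification of free finite actions on $\Sph^3$ together with the rigidity supplied by wideness via \pref{wide}, gives $\Gamma\cong\Z_k$ and hence $M^3\cong\Sph^3/\Z_k$.
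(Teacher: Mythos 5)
Your first half is essentially the paper's argument: establish $\tilde M^3\cong\Sph^3$, pick a cyclic cover $\Sph^3/\Z_q\to M^3$, apply \pref{main} with the appropriate coefficients to saturate the bound, deduce equality of the $\tau_k$ and the pointwise equalities $\mu_0=\mu_1$, $\mu_2=\mu_3$ almost everywhere, and push everything down to $M^3$ by a covering-multiplicity argument (minor slip: the degree of $\Sph^3/\Z_q\to M^3$ is $|\Gamma|/q$, not $q$, but the constant is irrelevant to the conclusion). All of this is fine and matches the paper.

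The second half, however, has a genuine gap. To prove $\Gamma$ is cyclic, the paper invokes the Morse cancellation theorem of \cite{mm}: since a.e.\ height function has $\mu_0=\mu_1$, there is an abstract Morse function on $M^3$ with a single critical point of index $0$ and a single one of index $1$, giving $M^3$ a CW structure whose $1$-skeleton is a circle; transversality then forces $\pi_1(M^3)$ to be cyclic. You instead attempt a group-theoretic elimination using cyclicity of abelianizations of subgroups, the exclusion of $Q_8$, and Smith theory to rule out $\Z_p\oplus\Z_p$. This is not enough. Take $\Gamma=Q_{12}$, the binary dihedral group of order $12$, which acts freely and linearly on $\Sph^3$ to give a prism manifold. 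Its abelianization is $\Z_4$, every proper subgroup is cyclic, it contains no copy of $Q_8$, and obviously no $\Z_p\oplus\Z_p$; moreover $b_1(\Sph^3/Q_{12},F)\le 1$ for every field $F$, so every constraint you derive from \pref{main} is satisfied. The same holds for all dicyclic groups $Q_{4p}$ with $p$ an odd prime. Your closing sentence, ``eliminating the remaining metacyclic possibilities, by invoking the classification of free finite actions on $\Sph^3$ together with the rigidity supplied by wideness via \pref{wide},'' is precisely where the proof would need to happen, and no argument is given there; \pref{wide} is a pointwise statement about shape operators and does not distinguish $\Z_k$ from $Q_{12}$. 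You should replace this with the Morse-theoretic cancellation argument, which is what actually closes the case.
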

\begin{proof}
$M^3$ is again covered by a lens space with $b_1=b_2=1$ and we have
equality in \pref{main}, which implies that $\tau_0=\tau_1=\tau_2=\tau_3$
(they do not have to be $1$, though). The Morse inequality tells us that
$\mu_1(v)-\mu_0(v)\geq b_1-b_0=0$ for a.e.
$v\in\Sph^{n+1}\subset\R^{n+2}$, and since $\tau_1=\tau_0$, it follows
that $\mu_1(v)=\mu_0(v)$. From $\mu_k(v) = \mu_{n-k}(-v)$ we get
$\mu_3=\mu_2$ for a.e. $v$.
Therefore for almost all $v$ the Morse function $h_v$ has the same number
of critical points of index 0 and 1 (first for the lens space, and
then for $M^3$ as well). But if this is the case, the cancellation result
in \cite{mm} says that there exists another (abstract) Morse function on
$M^3$ with only one critical point of index 0 and 1. In
particular, this means that $M^3$ is a CW complex whose 1 skeleton is a
circle, which implies by transversality that the map
$\pi_1(S^1)\to \pi_1(M^3)$ induced by the inclusion
is onto and hence $\pi_1(M^3)$ is finite cyclic.
\end{proof}

For infinite fundamental group, we have:

\begin{cor}\label{z}
If $\pi_1(M^n)$ is infinite, then $\pi_1(M^n)\cong \Z$ is also cyclic.
Moreover, $\tilde M^n = \Sph^{n-1}\times\R$ splits isometrically for some
metric of nonnegative curvature in $\Sph^{n-1}$, and $M^n$ is
diffeomorphic to $\Sph^{n-1}\times\Sph^1$ if $M^n$ is orientable, or to
the nonorientable quotient $(\Sph^{n-1}\times\Sph^1)/\Delta\Z_2$
otherwise. In addition, $\tau_0=\tau_1=\tau_{n-1}=\tau_n$, and
$\tau_2=\cdots=\tau_{n-2}=0$, and hence the immersion is wide.
Furthermore, the height functions satisfy $\mu_0=\mu_1$ as well as
$\mu_{n-1}=\mu_n$, and $\mu_2= \cdots =\mu_{n-2}=0$.
\end{cor}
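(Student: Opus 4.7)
The plan is to combine \pref{main} with a Morse cancellation argument analogous to the one in \cref{lens}, working with the splitting $\tilde M^n = \Sph^{n-1} \times \R$ established earlier in the section. Every isometry of $\Sph^{n-1} \times \R$ has the form $(x, t) \mapsto (\phi(x), \epsilon t + c)$, so projecting $\Gamma := \pi_1(M^n)$ to $\Iso(\R) = \R \rtimes \Z_2$ realizes it as an extension $1 \to \Gamma_0 \to \Gamma \to Q \to 1$ with $\Gamma_0$ finite (since it acts freely and isometrically on the compact $\Sph^{n-1}$) and $Q \in \{\Z, D_\infty\}$ a discrete cocompact subgroup.

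First I would bound $b_1(M^n, \Z_2)$: since $M^n$ is always orientable modulo $2$, \pref{main} together with Poincar\'e duality forces $b_1(M^n,\Z_2) = b_{n-1}(M^n,\Z_2) \leq 1$. On the other hand, $\Gamma^{\rm ab} \otimes \Z_2$ surjects onto $Q^{\rm ab} \otimes \Z_2$, which is $\Z_2$ if $Q = \Z$ and $\Z_2 \oplus \Z_2$ if $Q = D_\infty$; so the case $Q = D_\infty$ is immediately ruled out, and in the remaining case $b_1 = 1$ with equality in \pref{main}. That equality yields that the immersion is wide, $\tau_k = 0$ for $2 \leq k \leq n-2$, $\tau_1 - \tau_0 = b_1 - 1 = 0$, and combined with $\tau_{n-k} = \tau_k$ one obtains $\tau_0 = \tau_1 = \tau_{n-1} = \tau_n$. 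Integrating the Morse inequalities $\mu_k \geq b_k$ and $\mu_1 - \mu_0 \geq 0$ against these equalities of $\tau$'s, and using the symmetry $\mu_k(v) = \mu_{n-k}(-v)$ exactly as in the proof of \cref{lens}, gives $\mu_k(v) = 0$ for $2 \leq k \leq n-2$, $\mu_0(v) = \mu_1(v)$, and $\mu_{n-1}(v) = \mu_n(v)$ for almost every $v \in \Sph^{n+1}$. The Morse cancellation result of \cite{mm} then produces an abstract Morse function on $M^n$ with exactly one critical point of each of the indices $0, 1, n-1, n$ and none in between, so $M^n$ admits a CW structure whose $1$-skeleton is $\Sph^1$ and whose next cell is $(n-1)$-dimensional; hence $\pi_1(M^n)$ is generated by this $\Sph^1$-loop and is cyclic, so (being infinite) $\Gamma \cong \Z$, and in particular $\Gamma_0 = \{e\}$.

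It remains to identify the diffeomorphism type. Every infinite cyclic subgroup of $\Iso(\R)$ consists of translations (reflections there being involutions), so the generator of $\Gamma$ has the form $\gamma(x, t) = (\phi(x), t+c)$ for some $\phi \in \Iso(\Sph^{n-1})$, making $M^n$ the mapping torus of $\phi$. By \pref{rigidity} the Ricci flow on $\Sph^{n-1}$ converges to a round metric, so $\Iso(\Sph^{n-1}, g) \subseteq \O(n)$ and $\pi_0(\Iso(\Sph^{n-1}, g))$ has at most two elements; hence $\phi$ is smoothly isotopic either to the identity --- giving $M^n \cong \Sph^{n-1} \times \Sph^1$ --- or to a standard orientation-reversing involution --- giving the non-orientable mapping torus $(\Sph^{n-1} \times \Sph^1)/\Delta\Z_2$, the two cases distinguished by the orientability of $M^n$. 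The most delicate step in this plan is the cancellation of the $(\mu_0, \mu_1)$ and $(\mu_{n-1}, \mu_n)$ pairs via \cite{mm}: one must package the almost-everywhere Morse data on $M^n$ into a single abstract Morse function whose connecting trajectories admit Smale-type cancellation, exactly as in the analogous step of \cref{lens}.
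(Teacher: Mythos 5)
Your proposal is correct, and it differs from the paper's argument in two genuine ways worth noting.

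\emph{Opening move.} The paper does not attack $b_1(M^n)$ directly. Instead it extracts a finite-index subgroup $\Gamma'\cong\Z$ of $\Gamma$ acting by translations on the $\R$ factor, passes to the corresponding compact cover $M^*$ (for which $b_1(M^*,\Z_2)=b_{n-1}(M^*,\Z_2)=1$ is immediate), applies \pref{main} to $M^*$, and then transfers the resulting almost-everywhere equalities $\mu_0=\mu_1$, $\mu_{n-1}=\mu_n$, $\mu_k=0$ down to $M^n$ via the degree of the covering. You instead work on $M^n$ itself: the extension $1\to\Gamma_0\to\Gamma\to Q\to 1$ with $\Gamma_0$ finite and $Q\in\{\Z,D_\infty\}$, combined with $b_1(M^n,\Z_2)=\dim_{\Z_2}\Gamma^{\rm ab}\otimes\Z_2\ge\dim_{\Z_2}Q^{\rm ab}\otimes\Z_2$ and the bound $b_1\le 1$ from \pref{main}, rules out $D_\infty$ and forces $b_1=1$ without ever leaving $M^n$. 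Both routes then feed identical data into the Morse cancellation argument from \cref{lens} and reach $\pi_1(M^n)\cong\Z$. Your version is slightly less economical (the paper never needs to mention $D_\infty$ at all) but it is sound, and the two are interchangeable.

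\emph{Diffeomorphism type.} The paper says that if the monodromy $\sigma$ of the fiber bundle $M^n\to\Sph^1$ is orientation-preserving then it is ``homotopic to the identity and hence the bundle is trivial.'' As stated this is a small overclaim: triviality of the bundle requires $\sigma$ to be \emph{isotopic} to the identity, and for general diffeomorphisms of $\Sph^{n-1}$ homotopy does not imply isotopy (the mapping class group can be nontrivial). Your version supplies exactly the missing ingredient: since $\sigma$ is an isometry of a positively curved metric $g$ on $\Sph^{n-1}$, Ricci flow (preserving the isometry group and converging, by \pref{rigidity}, to a round metric) shows $\sigma$ lies in a conjugate of $\O(n)\subset\mathrm{Diff}(\Sph^{n-1})$; since $\O(n)$ has exactly two connected components, $\sigma$ is smoothly isotopic either to the identity or to a standard reflection, yielding the two listed diffeomorphism types. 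This is the right way to close the argument and is more careful than what is written in the paper.

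One minor remark: when you rule out $D_\infty$ it is cleaner to use that the splitting $\tilde M^n\cong \Sph^{n-1}\times\R$ is the de~Rham decomposition (so every isometry respects the factors), which you implicitly assume; this is fine, but should be said. Otherwise the argument is complete.
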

\proof
We already saw that under this assumption, $M^n$ is diffeomorphic to
$(\Sph^{n-1}\times\R)/\Gamma$ for some group $\Gamma$ acting properly
discontinuously. Since the quotient is compact, there exists a subgroup
$\Gamma'\simeq\Z$ which acts via translations on the $\R$ factor. Choose
a covering $f\colon M^*\to M$ such that the image of the fundamental
group under $f_*$ is equal to $\Gamma'$. Thus
$b_1(M^*,\Z_2)=b_{n-1}(M^*,\Z_2)=1$, and we can apply \pref{main} to the
induced immersion of $M^*$. As in the proof of \cref{lens}, it follows
that for the height functions on $M^*$ we have $\mu_1(v)=\mu_0(v)$ and
$\mu_{n-1}(v)=\mu_n(v)$ for a.e. $v$. The same thus holds for the height
functions of the immersion of $M^n$, and again as in the proof of
\cref{lens}, we see that $\pi_1(M)$ is cyclic, hence $\Gamma$ is
isomorphic to $\Z$. Finally, since the functions $\mu_i$ are nonnegative,
and $\tau_2= \cdots =\tau_{n-2}=0$, it follows that $\mu_2= \cdots
=\mu_{n-2}=0$ as well.

Now, projection onto the first factor gives rise to a fiber bundle
$M^n\to \Sph^1$ with fiber $\Sph^{n-1}$ which is hence isomorphic to
$\Sph^{n-1}\times[0,1]/(p,0)\sim (\sigma(p),1)$ for some
diffeomorphism~$\sigma$. If $\sigma$ is orientation preserving, $\sigma$
is homotopic to the identity and hence the bundle is trivial, in which
case $M^n$ is diffeomorphic to $\Sph^{n-1}\times\Sph^1$. If on the other
hand $\sigma$ is orientation reversing, $M^n$ is nonorientable and the
orientable double cover is diffeomorphic to $\Sph^{n-1}\times\Sph^1$ and
hence $M^n$ is diffeomorphic to $(\Sph^{n-1}\times\Sph^1)/\Delta\Z_2$.
Notice that, in both cases, the product structure is not necessarily
isometric.
\qed
\vspace{1.5ex}

The presence of points with positive curvatures imposes further
restrictions.

\begin{cor}\label{posric}
If there exists a point with positive Ricci curvature, then either $M^n$
is diffeomorphic to $\Sph^n$, or $M^n$ is isometric to a product
$\Sph^k\times\Sph^{n-k}$ and $f$ is a product of two convex
hypersurfaces, for some $2\leq k\leq n-2$. In particular, if there is a
point with positive sectional curvature, then $M^n$ is diffeomorphic to
$\Sph^n$.
\end{cor}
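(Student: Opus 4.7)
The plan is to use the four-case classification of $M^n$ given by \tref{a} and rule out cases $(c)$ and $(d)$ under the hypothesis of positive Ricci at some point; cases $(a)$ and $(b)$ are already allowed by the desired conclusion. Throughout I will use the identity, obtained from tracing the Gauss equation in codimension two, that the Ricci operator of $f$ satisfies
\[
\Ric=\tr(A)\,A-A^2+\tr(B)\,B-B^2.
\]

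Case $(c)$ is quickly dispatched. The isometric splitting $\tilde M^n=\Sph^{n-1}\times\R$ supplies a parallel tangent field $\partial_t$ along the $\R$-factor, and every 2-plane containing $\partial_t$ has zero sectional curvature. Hence $\Ric(\partial_t,\partial_t)\equiv 0$ on $\tilde M^n$ and, since Ricci is a local invariant, on $M^n$ as well, contradicting positive Ricci at any point.

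For case $(d)$, where $n=3$ and $M^3\cong\Sph^3/\Z_k$ with $k>1$, \cref{lens} tells us the immersion is wide. Let $p$ be a point of positive Ricci. Any $X\in\ker A(p)\cap\ker B(p)$ satisfies $\alpha(X,\cdot)\equiv 0$, forcing $\Ric(X,X)=0$; positive Ricci therefore gives $\ker A(p)\cap\ker B(p)=0$. By \pref{wide} we land in the split alternative $\ker A\oplus\ker B=T_pM^3$ with $A,B\neq 0$, so $\{\rk A,\rk B\}=\{1,2\}$ in dimension three. Say $\rk A=1$, so $A=a\,u\otimes u$ for some unit $u$ and $a>0$. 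The decisive observation is that $A^2=a\,A=\tr(A)\,A$, so the $A$-terms in the Ricci formula cancel and
\[
\Ric_p=\tr(B)\,B-B^2.
\]
In an orthonormal eigenbasis of $B$ with eigenvalues $b_1,b_2,0$, this operator reads $\diag(b_1b_2,\,b_1b_2,\,0)$, so any unit $w\in\ker B$ yields $\Ric(w,w)=0$, contradicting positive Ricci at $p$.

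Finally, positive sectional curvature at $p$ implies positive Ricci at $p$, so by the main part $M^n$ is either diffeomorphic to $\Sph^n$ or isometric to $\Sph^k\times\Sph^{n-k}$. In the latter case every 2-plane spanned by one vector tangent to each factor has zero sectional curvature, which is incompatible with positive sectional curvature at $p$; hence $M^n$ must be diffeomorphic to $\Sph^n$. The most delicate step I anticipate is the rank reduction in case $(d)$: the identity $\tr(A)\,A=A^2$ for a rank-one positive semidefinite operator is what forces the Ricci tensor to depend only on $B$ and exposes $\ker B$ as its unavoidable null direction.
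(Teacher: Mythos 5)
Your proof is correct and follows essentially the same route as the paper's: rule out case $(c)$ via the $\R$-splitting of the universal cover forcing a Ricci-null direction, and rule out case $(d)$ by combining \cref{lens} and \pref{wide} to land in the rank split $\{1,2\}$ in dimension three, then exhibiting a Ricci-null direction in the kernel of the rank-two operator. The only difference is cosmetic: you obtain that null direction from the traced Gauss identity $\Ric=\tr(A)A-A^2+\tr(B)B-B^2$ (after observing that the rank-one term cancels), whereas the paper notes directly that $\hat R=\Lambda^2 A+\Lambda^2 B=\Lambda^2 A$ when $\rk B=1$, so all $2$-planes containing $\ker A$ are flat -- two equivalent formulations of the same observation.
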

\begin{proof}
The fundamental group must be finite since otherwise \cref{z} implies
that $\tilde M^n$ splits off a real line. If $n=3$, then \pref{wide} implies
that at a point with $Ric>0$, one of $A,B$, say $A$, has rank 2, and
$B$ has rank one. But then $\hat{R}=\Lambda^2A+\Lambda^2B=\Lambda^2A$ and
hence all 2-planes containing $\ker A$ have curvature 0, thus
contradicting $Ric>0$. In the remaining case \cref{sn} proves our claim.
\end{proof}

In particular, Corollaries \ref{sn}, \ref{lens} and \ref{z} imply the
following.

\begin{cor}\label{mustbewide}
If $M^n$ is not diffeomorphic to a sphere, then the immersion is wide.
\end{cor}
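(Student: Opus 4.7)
The plan is to obtain Corollary \ref{mustbewide} as a straightforward bookkeeping consequence of the three preceding corollaries. From the splitting analysis earlier in the section we know that under the hypotheses on $f\colon M^n\to \R^{n+2}$, exactly one of the following holds: $\tilde M^n$ is diffeomorphic to $\Sph^n$; $\tilde M^n$ splits isometrically as $\Sph^{n-1}\times\R$ (forcing $\pi_1(M^n)$ infinite since $M^n$ is compact); or $M^n$ itself is isometric to a Riemannian product $\Sph^k\times\Sph^{n-k}$ with $2\le k\le n-2$. Assuming $M^n$ is not diffeomorphic to $\Sph^n$, the plan is to verify that each of these alternatives already forces $f$ to be wide.

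First I would dispose of the case $\pi_1(M^n)$ infinite by quoting Corollary \ref{z}, which asserts wideness directly. Next, if $M^n$ is isometric to a product $\Sph^k\times\Sph^{n-k}$, Corollary \ref{sn} states that $f$ is a product of two convex hypersurfaces and is wide, so nothing remains. The only case left is $\tilde M^n$ diffeomorphic to $\Sph^n$ with $\pi_1(M^n)$ finite. For $n\ge 4$, Corollary \ref{sn} forces $M^n$ to be simply connected, hence $M^n\cong \tilde M^n\cong \Sph^n$, contradicting the hypothesis. For $n=3$, either $M^3$ is simply connected, in which case $M^3\cong \Sph^3$ (again a contradiction), or $\pi_1(M^3)\neq 0$ is finite and Corollary \ref{lens} applies to give wideness. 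The cases are exhaustive, so the conclusion follows.

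The statement is purely organizational and there is no real obstacle: all the genuine content, namely the explicit values $\tau_0=\tau_1=\cdots=\tau_{n-1}=\tau_n$ in the various cases and the Morse-theoretic identifications forcing the immersion to sit at equality in \eqref{chen}, was already established in Corollaries \ref{sn}, \ref{lens}, and \ref{z}. The only thing to be careful about is that all three topological alternatives for $(\tilde M^n, M^n)$ are covered, and that the logical escape "$M^n\cong \Sph^n$" is ruled out by hypothesis wherever it would otherwise apply.
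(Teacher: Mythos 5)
Your proof is correct and takes essentially the same route as the paper, whose entire argument is the one-line remark that Corollaries~\ref{sn}, \ref{lens} and \ref{z} together imply the statement. You have merely spelled out the exhaustive case analysis on $(\tilde M^n, \pi_1(M^n))$ that the paper leaves implicit.
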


\vskip 0.1cm
\section{Wide and locally wide immersions}
\medskip

In this section we obtain further information about the way the manifold
$M^n$ is immersed, that is, about the \ii $f$. As we saw in
\cref{mustbewide}, unless $M^n$ is diffeomorphic to the sphere, for which
the space of nonnegatively curved immersions is quite rich, the \ii $f$
must be \pepe. Hence the purpose of this section is to understand these
immersions. We have chosen this approach since there are interesting
wide immersions of spheres, as shown in Example 2.

\bigskip

We assume from now on that $M^n$ is a Riemannian manifold with nonnegative
sectional curvature, $n\geq 3$, and $f:M^n\to\R^{n+2}$ is an isometric
immersion. To provide a deeper understanding of the local phenomena, we
do not require  $M^n$ to be complete or compact unless otherwise
stated. Instead, we only assume that $f$ satisfies the local consequence
in \pref{wide} of being wide. In other words, again following the
notations in \eqref{AB}, we say that $f$ is {\it locally wide} if, at
every point $p\in M$, either
\begin{equation}\label{localpepe}
\ker A\cap\ker B \neq 0,
\ \ \ \text{or}
\ \ \ \ker A\oplus \ker B = T_p M\ \ \text{with}\ A,B\neq 0.
\end{equation}
 From now on we will also assume in the second case that
$\rk B\le \rk A$.

Recall that the {\it index of nullity} $\mu$ and the {\it nullity}
$\Gamma$ of $M$ are defined as
$$
\Gamma(p)=\{u\in T_pM
\colon R(a,b)u=0 \ \  \forall\ a,b\in T_pM\},  \  \text{ and }\
\mu(p)=\dim \Gamma(p).
$$
Furthermore, we have the {\it index of relative nullity} $\nu(p)$ of $f$
at $p$ defined as the dimension of the {\it relative nullity}
$\Delta(p)$ of $f$ at $p$,
$$
\Delta(p)= \{u\in T_pM\colon \alpha(u,v)=0 \ \, \forall \, v\in T_pM\}=
\ker A(p)\cap\ker B(p).
$$
Notice that $\mu$ is an intrinsic invariant, while $\nu$ is extrinsic. By
the Gauss equation, $\Delta\subseteq\Gamma$ and hence $\nu\le\mu$.
We will also use the well know fact (see e.g. \cite{chk,ma}) that the
nullity distribution, as well as the relative nullity distribution, is
integrable on any open set where it has constant dimension, and its
leaves are totally geodesic in $M^n$. In the case of the relative
nullity, the images of the leaves under $f$ are open subsets of affine
subspaces of the Euclidean space as well. In addition, if $M^n$ is
complete, the leaves of both distributions are complete on the open set
of minimal nullity or minimal relative nullity.

Choosing an orthonormal normal frame $\{\xi,\eta\}$ as in \eqref{AB}, by
\eqref{localpepe} $M^n$ can be written as the disjoint union
$$
M^n = K\cup U_1\cup \cdots\cup U_{[n/2]},
$$
where $K$ is the subset of positive index of relative nullity,
$$
K = \{p\in M^n: \nu(p) > 0\},
$$
and, for $1\leq k \leq [n/2]$, $U_k$ is  given by
$$
U_k = \{p\in M^n: \rk\, A(p) = n-k,\,\rk\, B(p) = k  \ \ \text{and} \ \ker A(p)\cap \ker B(p) = 0\}.
$$
 From the continuity of the eigenvalues and
$\hat{R}=\Lambda^2A+\Lambda^2B$ it follows that:

\begin{itemize}
\item $K$ is closed, and the sets $U_k$ are open with $\partial U_k \subset K$;
\item $U_2\cup\cdots\cup U_{[n/2]}$ is the set of all points with
positive Ricci curvature, $Ric>0$;
\item On $U_1$ we have $\hat{R}=\Lambda^2A$ and hence $\Gamma=\ker A  $ and $\mu=1$, $\nu=0$.
\end{itemize}

\begin{rem}\label{KU}
{\rm The above discussion and \cref{posric} imply that, if $M^n$ is
compact and $f$ is wide, then $f$ is isometric to a product of convex
hypersurfaces unless $U_k=\emptyset$ for all $k\geq 2$. Therefore,
for the immersions of types $(c)$ and $(d)$ in \tref{a} we have that
$M^n=K\cup U_1$, and, in particular, $\mu\geq 1$ everywhere.}
\end{rem}

In what follows, on $M^n\setminus K$ where we have seen that the frame
$\{\xi,\eta\}$ is unique and smooth, $w$ will denote the normal
connection form of $f$ given by
$$
w(X):=\la\nabla^\perp_X\xi,\eta\ra
$$
for $X\in TM^n$, and thus $\nabla^\perp_X\xi=w(X)\eta$ and
$\nabla^\perp_X\eta=-w(X)\xi$. Hence the Codazzi equations $(\nabla_X
A)(Y,\beta)=(\nabla_Y A)(X,\beta)$ become
$$\nabla_X AY-A\nabla_XY-w(X)BY = \nabla_Y AX-A\nabla_YX-w(Y)BX,$$
$$\nabla_X BY-B\nabla_XY+w(X)AY = \nabla_Y BX-B\nabla_YX + w(Y)AX,$$
or, equivalently,
$$
\nabla_X AY-\nabla_Y AX=A([X,Y])+B(w(X)Y-w(Y)X)
$$
$$
\nabla_X BY-\nabla_Y BX=B([X,Y])-A(w(X)Y-w(Y)X),
$$
while the Ricci equation is
$$\ml R^\perp(X,Y)\xi,\eta\mr = dw(X,Y) = \la[A,B]X,Y\ra.$$

\smallskip

For the sake of completeness, let us first analyze the local behaviour of
$f$ on $U_k$ for $k\geq 2$. We will see that it is already a product of
strictly convex hypersurfaces, hence showing that case $(b)$ in \tref{a}
has its roots in a local phenomenon.

\begin{prop}\label{prod}
Assume that $f$ is locally wide and $2\leq k \leq [n/2]$. Then the
immersion $f|_{U_k}$ is locally a product immersion of two strictly
convex Euclidean hypersurfaces whose respective normal vectors are $\xi$
and $\eta$.
\end{prop}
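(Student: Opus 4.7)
The plan is to use the Codazzi and Ricci equations on $U_k$ first to show that the normal connection form $w$ vanishes, which will force $[A,B]=0$ and the orthogonality $\ker B=(\ker A)^\perp$; then to apply Codazzi once more to show that $\ker A$ and $\ker B$ are parallel distributions; and finally to invoke Moore's lemma on products of isometric immersions to obtain the local product decomposition. Strict convexity of the factors will follow because the respective shape operators are positive definite.

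To show $w\equiv 0$ on $U_k$, I apply Codazzi for $A$ to $X,Y\in \ker A$, obtaining $A[X,Y] = w(Y)BX - w(X)BY$. Since $A[X,Y]\in\mathrm{range}(A)=(\ker A)^\perp$, the orthogonal projection of both sides to $\ker A$ gives $w(Y)\hat B(X) = w(X)\hat B(Y)$ in $\ker A$, where $\hat B:\ker A\to\ker A$ denotes the operator $\hat B(Z) := (BZ)^{\ker A}$. This $\hat B$ is symmetric and positive definite: $\la \hat B Z,Z\ra = \la BZ,Z\ra\geq 0$, and $\la BZ,Z\ra>0$ for every $Z\in\ker A\setminus\{0\}$ because the complementarity $\ker A\cap\ker B=0$ prevents $Z$ from lying in $\ker B$. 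Hence $\hat B$ is an automorphism of the $k$-dimensional space $\ker A$. If $w|_{\ker A}$ were nonzero at some $X_0$, the identity $w(Y)\hat B(X) = w(X)\hat B(Y)$ would force $\hat B(\ker A)\subseteq\mathrm{span}\{\hat B(X_0)\}$, a $1$-dimensional subspace, contradicting $k\geq 2$. So $w|_{\ker A}=0$. The symmetric argument with Codazzi for $B$ applied to $\ker B$, using $\dim\ker B=n-k\geq 2$ (which holds because $k\leq[n/2]$ and therefore $n\geq 4$), gives $w|_{\ker B}=0$, and as $\ker A\oplus\ker B=T_pM$ we conclude $w\equiv 0$ on $U_k$. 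The Ricci equation $dw=\la [A,B]\cdot,\cdot\ra$ then forces $[A,B]=0$; combined with $\ker A\cap\ker B=0$, a simultaneous orthogonal diagonalization and a dimension count give $\ker B=(\ker A)^\perp$, and both $A|_{\ker B}$ and $B|_{\ker A}$ are automorphisms of $\ker B$ and $\ker A$, respectively.

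For the parallelism, I apply Codazzi for $A$ (now with $w=0$) to $X\in\ker A$ and $V\in\ker B$: the identity $(\nabla_X A)V=(\nabla_V A)X$ rearranges to $\nabla_X(AV) = A[X,V]$, and since $A[X,V]\in\mathrm{range}(A)=\ker B$ we obtain $\nabla_X(AV)\in\ker B$. As $V$ varies over $\ker B$, the vector $AV$ sweeps all of $\ker B$ (because $A|_{\ker B}$ is an automorphism), so $\nabla_X W\in\ker B$ for every $W\in\ker B$ and every $X\in\ker A$. A standard short computation using the orthogonality of the splitting shows this condition is equivalent to $\ker A$ being totally geodesic in $M^n$. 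The symmetric argument with Codazzi for $B$ makes $\ker B$ totally geodesic. Since the two complementary orthogonal distributions are totally geodesic, each is parallel.

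Finally, the formula $\alpha(X,Y) = \la AX,Y\ra\xi + \la BX,Y\ra\eta$ immediately yields $\alpha(\ker A,\ker B)=0$, and the normals $\xi,\eta$ are parallel in the normal bundle since $w=0$; therefore Moore's lemma on products of isometric immersions applies, and $f|_{U_k}$ decomposes locally as a product $f_1\times f_2$ of Euclidean hypersurfaces $f_i:M_i\to\R^{n_i+1}$, where $TM_1=\ker B$ (with unit normal $\xi$) and $TM_2=\ker A$ (with unit normal $\eta$). Their shape operators are the restrictions $A|_{\ker B}$ and $B|_{\ker A}$, both positive definite, so both hypersurfaces are strictly convex. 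The main obstacle is the first step, the vanishing of $w$; once this is secured the rest is a clean chain of consequences of the Codazzi identities, and the rank-collapse argument driving $w=0$ is precisely the place where the hypothesis $2\leq k\leq[n/2]$ is essential.
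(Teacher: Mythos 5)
Your proof is correct and follows the paper's own argument in all essentials: Codazzi together with the dimension bounds $\dim\ker A,\dim\ker B\geq 2$ forces $w\equiv 0$, the Ricci equation then yields $[A,B]=0$ and hence $\ker B=(\ker A)^\perp$, a second Codazzi computation shows both distributions are totally geodesic and hence parallel, and Moore's lemma gives the local product immersion, with strict convexity coming from positive definiteness of the restricted shape operators. The only small variation is at the first step, where you project the Codazzi identity onto $\ker A$ and invoke positive definiteness of the projected operator $\hat B$, whereas the paper observes directly that $w(X)Y-w(Y)X$ lies in $\ker A\cap\ker B=0$; these are the same rank argument in slightly different clothing.
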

\proof
Since their dimensions are constant, the distributions $\ker A$ and $\ker
B$ are smooth, have rank bigger or equal than two, and satisfy
$\ker A\oplus \ker B = TU_k$.

For $X,Y\in \ker A$, the Codazzi equations imply that $A[X,Y] =
B(w(X)Y-w(Y)X)$. Since $\im A \cap \im B=0$, both sides have to be $0$
and hence $\ker A$ is integrable. Furthermore, $\ker A \subset \ker w$
since $\ker A \cap \ker B = 0$ and $\dim\ker A\geq 2$. Indeed, If $AX=0$,
choose a linearly independent $Y\in \ker A \cap\ker\omega$, which implies
$w(X)BY=w(Y)BX$ and hence $\omega(X)=0$. Analogously, $\ker B$ is
integrable and $\ker B \subset \ker w$. Therefore, $w|_{U_k}=0$ which
implies that $f|_{U_k}$ has flat normal bundle. From the Ricci equation
it now follows that $A$ and $B$ commute, and hence $\ker A \perp \ker B$.

For $X,Y\in \ker A= \im B $ and $U\in \ker B=\im A $ the Codazzi
equations imply that $A[X,U] = \nabla_X AU$. Since $U$ is arbitrary,
$\nabla_XU\in \im A$ as well. Hence
$\ml \nabla_X Y,U\mr=-\ml Y,\nabla_X U\mr=0$ which implies that the
distribution $\ker A$ is totally geodesic, and similarly so is $\ker B$.
We conclude that both $\ker A$ and $\ker B$ are mutually orthogonal
transversal totally geodesic distributions, and hence both are parallel,
and $M^n$ is locally a product. Furthermore,
$\alpha_f(X,U)=\ml AX,U\mr\xi+\ml BX,U\mr\eta=0$ for all $X\in \ker A$
and $U\in\ker B$. The proposition then follows from the Main Lemma in
\cite{m1}. Observe that each factor is strictly convex since $A$ and $B$
are positive definite on the corresponding factors.
\qed
\vspace{1.5ex}

Before continuing we point out the following consequence.

\begin{cor}\label{ric}
If $M^n$ is complete, $Ric_M>0$, and $f$ is locally wide,
then $f$ is a global product of two strictly convex embedded
hypersurfaces.
In particular, $f$ is rigid.
\end{cor}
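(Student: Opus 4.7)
The plan has three stages. First, I will show that $\Ric>0$ restricts $M^n$ to a single stratum $U_k$ with $k\ge 2$. By the bulleted list preceding \rref{KU}, the set of points with positive Ricci curvature is exactly $U_2\cup\cdots\cup U_{[n/2]}$, so the hypothesis forces $M^n=U_2\cup\cdots\cup U_{[n/2]}$. These being pairwise disjoint open subsets of the connected manifold $M^n$, connectedness implies that $M^n$ equals a single $U_k$ with $2\le k\le [n/2]$. Then \pref{prod} applies on all of $M^n$: the distributions $\ker A$ and $\ker B$ are smooth, mutually orthogonal, transversal, parallel totally geodesic distributions, and $f$ is locally a Riemannian product of two strictly convex hypersurface immersions. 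Moreover, Myers' theorem makes $M^n$ compact with finite fundamental group, so the universal cover $\pi\colon\tilde M^n\to M^n$ is finite.

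The next step is to globalize the product structure. By the de Rham decomposition theorem, the parallel splitting $T\tilde M^n=\ker A\oplus\ker B$ integrates to an isometric product $\tilde M^n=\tilde M_1\times\tilde M_2$. The lifted immersion $\tilde f=f\circ\pi$ is still locally a product (with the same factor dimensions everywhere, thanks to the constant ranks of $A,B$), so the Main Lemma of \cite{m1}, applied on the simply connected $\tilde M^n$, yields a global product representation $\tilde f=f_1\times f_2$ with $f_i\colon\tilde M_i\to \R^{d_i}$ a complete hypersurface immersion whose shape operator is positive definite everywhere. By Hadamard's theorem, each $f_i$ is then an embedding of $\tilde M_i$ as a compact convex hypersurface diffeomorphic to a sphere. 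In particular $\tilde f=f_1\times f_2$ is an embedding, which forces $\pi$ to be injective; hence $M^n$ itself is the global product of two strictly convex embedded hypersurfaces.

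The main obstacle I anticipate is the globalization in the second step: one has to verify that the hypotheses of Moore's Main Lemma, a local-to-global statement for isometric immersions of Riemannian products in codimension two, are genuinely met by $\tilde f$. This reduces precisely to the constancy of the ranks of $A$ and $B$, which is exactly what the first step provides. The final assertion about rigidity then follows by combining: the classical rigidity of each compact strictly convex Euclidean hypersurface (Cohn-Vossen in dimension two, Sacksteder in higher dimensions), applied factorwise; the fact that the splitting $T M=\ker A\oplus\ker B$ is determined by the curvature operator of $M^n$ and is therefore intrinsic; and the uniqueness up to rigid motion of each convex factor embedding.
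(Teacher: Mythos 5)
Your proposal is correct and follows essentially the same route as the paper: pass to the universal cover, observe that $K=U_1=\emptyset$ forces $\tilde M^n=U_{k_0}$ for some $k_0\ge 2$, apply Proposition~\ref{prod} together with de~Rham to split $\tilde M^n$ globally, deduce that $\tilde f$ is a product of strictly convex hypersurface immersions, conclude each factor is embedded, and hence that $\pi$ is injective. The only differences are in the auxiliary citations: you invoke Myers' theorem to get compactness and ``Hadamard'' for embeddedness of the factors, whereas the paper argues directly from completeness via Sacksteder \cite{sa1} (which applies without first deducing compactness); and for the passage from ``locally a product'' to ``globally a product immersion'' the paper cites Alexander--Maltz \cite{am}, which is the intrinsically global statement, rather than Moore's Main Lemma \cite{m1}, which is local---so if you use \cite{m1} you should appeal to Moore's global Theorem~1 there rather than the Main Lemma. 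Your sketch of the rigidity claim is a reasonable amplification of the paper's unproved ``in particular''.
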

\proof
Consider $\pi:\tilde M^n\to M^n$ the universal cover of $M^n$
with the covering metric, and $\tilde f = f \circ \pi$. By hypothesis,
$K$ and $U_1$ are empty. Since $\partial U_k\subset K$ for all
$k$, there is $k_0\geq 2$ such that $U_{k_0}=\tilde M^n$. But then
\pref{prod} and the deRham decomposition theorem imply that $\tilde M^n$
is globally a product. From \cite{am} it follows that $\tilde f$
is a product immersion of two strictly convex Euclidean hypersurfaces.
By \cite{sa1}, a complete strictly convex Euclidean hypersurface
is the boundary of a strictly convex body and hence embedded. Thus
$\tilde f$ is injective and so $\tilde f = f$ and $\tilde M^n=M^n$.
\qed
\vspace{1.5ex}

The description of $f$ on the set $U_1$ is considerably more delicate.
This case is of main interest to us since, as pointed out in \rref{KU},
for the immersion in case $(c)$ and $(d)$ of \tref{a}, we have
$M=K\cup U_1$. Furthermore, $U_1$ is nonempty since otherwise the open
subset of minimal relative nullity in $K$ would be foliated by complete
straight lines in Euclidean space, contradicting compactness.

\vspace{1.5ex}

We need the following definition from \cite{df1}.

\begin{definition}\label{comp} {\rm
Given an \ii $g\colon\,M^n\to\R^{n+1}$, we say that another \ii
$f\colon\,M^n\to\R^{n+2}$ is a {\it composition (of $g$)}
when there is an isometric embedding $g'\colon M^n\hookrightarrow
N_0^{n+1}$ into a flat manifold $N_0^{n+1}$, an \ii
$j\colon\,N_0^{n+1}\to\R^{n+1}$ (that is, a local isometry) satisfying
$g=j\circ g'$, and an \ii $h\colon\,N_0^{n+1}\to\R^{n+2}$ such that
$f=h\circ g'$.}
\end{definition}

Observe that, for any open subset $U\subset M^n$ where $g_{|U}$ in the
above is an embedding, we can assume that $N_0^{n+1}$ is an open subset
on $\R^{n+1}$ and $j$ is the inclusion.

Compare the following local description of $f$ on $U_1$
with the structure of Example 1 in the introduction.

\begin{prop}\label{b0} If $\pi:\tilde U_1\to U_1$ is the universal cover
of $U_1$, then $f\circ\pi$ is a composition of a convex
Euclidean hypersurface with constant index of relative nullity one.
\end{prop}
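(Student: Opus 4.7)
The plan is to show that on $U_1$ the immersion $f$ satisfies the composition criterion of \cite{df1}, and then to globalize the resulting local factorization to the simply connected cover $\tilde U_1$. Since $B\geq 0$ has rank one, write $BX = \mu\,\langle X,v_0\rangle v_0$ for a smooth unit vector field $v_0$ and a positive function $\mu$, so that $\ker B = v_0^\perp$ is a smooth $(n-1)$-plane field. As already observed, $\hat R = \Lambda^2 A$ on $U_1$, so the intrinsic nullity $\Gamma$ coincides with the smooth line field $\ker A$, transversal to $\ker B$. The criterion I intend to extract from the structure equations is the identity $\ker B\subseteq\ker w$, equivalently $w = c\,\langle\,\cdot\,,v_0\rangle$ for a smooth function $c$; this is precisely the condition guaranteeing that the Gauss-type map $\eta\colon M\to \Sph^{n+1}$, whose differential is $d\eta(X) = -f_*BX - w(X)\xi$, has rank one along $U_1$ and thus factors through a curve in $\Sph^{n+1}$.

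To establish $\ker B\subseteq\ker w$, I would feed vector fields $X,Y\in\ker B$ into the Codazzi equation for $B$, $\nabla_X BY-\nabla_Y BX = B[X,Y] - A(w(X)Y-w(Y)X)$; the left-hand side vanishes identically for such fields, leaving $B[X,Y] = A(w(X)Y-w(Y)X)$, where the left-hand side lies in the one-dimensional $\im B$ while the right-hand side lies in the $(n-1)$-dimensional $\im A$. Combined with the transversality $\ker A\cap\ker B=0$ and the decomposition $TM = \ker A\oplus\ker B$, this incompatibility should force $w$ to annihilate $\ker B$, with additional input coming from the Codazzi equation for $A$ applied along $\ker A$-directions and from the Ricci equation $dw = \langle[A,B]\,\cdot\,,\,\cdot\,\rangle$. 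Once this holds, $d\eta$ has one-dimensional image everywhere, so on the simply connected cover $\tilde U_1$ the map $\eta\circ\pi$ traces a smooth curve $\gamma\subset\Sph^{n+1}$. The envelope of the resulting one-parameter family of affine hyperplanes $\Pi_p = f(\pi(p))+\eta(\pi(p))^\perp$ defines a flat immersed hypersurface $h\colon N_0^{n+1}\to\R^{n+2}$, and the contact-point map $j\colon\tilde U_1\to N_0$ provides an isometric immersion of codimension one with $f\circ\pi = h\circ j$ by construction.

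Unpacking the standard second fundamental form decomposition $\alpha_{f\circ\pi} = h_*\alpha_j + \alpha_h(j_*\cdot,j_*\cdot)$ in the canonical normal frame $\{\xi,\eta\}$ identifies the shape operator of $j$ in $N_0$ with the tensor $A$; since $A\geq 0$ has constant rank $n-1$, the immersion $j$ is a convex Euclidean hypersurface with constant one-dimensional relative nullity distribution $\ker A = \Gamma$, as claimed. The hard part will be the Codazzi-based verification of $\ker B\subseteq\ker w$ on all of $U_1$: the identity $B[X,Y] = A(w(X)Y-w(Y)X)$ equates outputs of incompatible ranks, and closing it requires combining it carefully with the Codazzi relation for $A$ and with the rank-one structure of $B$. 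A secondary subtlety is the envelope construction on $\tilde U_1$: simply-connectedness does not by itself guarantee that $\gamma$ is embedded, so the globalization must appeal to the composition machinery of \cite{df1} to produce $N_0$ as a well-defined (possibly non-injectively) immersed flat hypersurface carrying the factorization.
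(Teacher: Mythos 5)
Your first step---deriving $\ker B \subseteq \ker w$ from the Codazzi equation for $B$ applied to $X,Y\in\ker B$---is the paper's argument, and your outline of it is essentially correct: from $B[X,Y]=-A(w(X)Y-w(Y)X)$ and $\im A\cap\im B=0$ (which follows since $A,B\geq 0$ and $\ker A\oplus\ker B=T_pM$ imply $\im A=(\ker A)^\perp$, $\im B=(\ker B)^\perp$), both sides vanish, so $w(X)Y-w(Y)X\in\ker A\cap\ker B=0$, and linear independence of $X,Y$ then forces $w(X)=w(Y)=0$. You flag this as ``the hard part'' and suggest the Codazzi equation for $A$ and the Ricci equation are also needed; in fact neither is, and the argument closes by itself. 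The ``incompatible ranks'' remark is a red herring: the reason both sides vanish is that $\im A\cap\im B=0$, not a count of dimensions.

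Where the proposal genuinely diverges from the paper---and where it leaves a gap---is the second half. The paper uses $\ker B\subseteq\ker w$ a second time to show that the tensor $S(X,Y)=\nabla_XAY-\nabla_YAX-A[X,Y]=B(w(X)Y-w(Y)X)$ vanishes identically (since $S$ is skew and for independent $X,Y$ one can take $X\in\ker w$), i.e.\ that $A$ satisfies the Codazzi equation for a hypersurface. Combined with $\hat R=\Lambda^2A$ (Gauss), the Fundamental Theorem of Submanifolds then produces, on the simply connected $\tilde U_1$, an explicit hypersurface $g:\tilde U_1\to\R^{n+1}$ with second fundamental form $A$; convexity and $\nu_g\equiv 1$ are immediate from $A\geq0$ and $\rk A=n-1$. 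Only then is Proposition~8 of \cite{df1} invoked, for which $\ker B\subseteq\ker w$ is exactly the hypothesis. Your envelope-of-hyperplanes construction attempts to produce the flat manifold $N_0$ and the factorization directly, but it never establishes the existence of the hypersurface $g$ that the proposition asserts, it asserts without proof that the contact map $j$ is an isometric immersion with shape operator $A$, and it does not verify that the envelope is an immersed flat hypersurface at all. These are precisely the facts that become free once you check Codazzi for $A$ and cite \cite{df1}; your route would, if pushed through, amount to reproving Proposition~8 of \cite{df1} rather than using it. The cleaner path is: extract $S\equiv 0$ from the same $\ker B\subseteq\ker w$ computation, apply the Fundamental Theorem to get $g$ on the universal cover, and then cite the composition criterion.
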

\proof
Since $\rk B=1$, and hence $\hat{R}=\Lambda^2A$, the shape operator $A$
alone satisfies the Gauss equation along $U_1\subset M^n$. We claim that
$A$ also satisfies the Codazzi equation for hypersurfaces, that is, the
skew symmetric tensor $S(X,Y):=\nabla_XAY - \nabla_YAX - A[X,Y]$
vanishes.

For any $X,Y\in TU_1$, the Codazzi equation for $A$ tells us that $S(X,Y)
= B(w(X)Y-w(Y)X)$. For $X,Y\in\ker B$, the Codazzi equation for $B$ says
that $B[X,Y]=-A(w(X)Y-w(Y)X)$. Since $\dim \ker B\ge 2$, it follows as in
the proof of \pref{prod} that $\ker B \subset \ker w$. Thus either
$\ker B = \ker w$ or $w=0$. In the first case, since $S$ is skew
symmetric, we can assume that $X$ and $Y$ are linearly independent, and
hence can assume that $X\in\ker\omega$. But then $S(X,Y)=0$. Altogether,
$S$ vanishes.

By the Fundamental Theorem of Submanifolds, locally on $U_1$ (or globally
on $U_1$ if it is simply connected), there exists a Euclidean
hypersurface $g$ whose second fundamental form is $A$.
Since $\rank A = n-1 \geq 2$, $\nu_g=\mu|_{U_1}\equiv 1$.
Now, since $\ker B \subset \ker w$, by Proposition~8 in \cite{df1} we
have that $f$ is a composition of $g$ (see also Proposition~9 in
\cite{df}).
\qed

\begin{rem*}
Since $\nu_g=1$, the nullity geodesics in $U_1$ are (locally) mapped by
$g$ into straight lines of $\R^{n+1}$. If $M=U_1$ is complete and simply
connected, we will see in the proof of \tref{b1} that these straight
lines are parallel and $g$ is globally a cylinder.
\end{rem*}

\vspace{1ex}

We will also discuss properties of the metric and the immersion on the
set of index of nullity $1$, i.e., on $V:=\mu^{-1}(1)$, and the open set
$U_1'$ where $B$ does not vanish (and hence $f$ is 1-regular, i.e.,
$\dim\spa \im(\alpha) = 2$),
$$
U_1'= \{p\in V: \rank B(p) = 1\}\supseteq U_1.
$$
Recall that by
the Gauss equations on the set $U_1$ we have $\mu=1$ and, by definition,
$\nu=0$ as well. Thus,
$$
U_1\subseteq U_1'\subseteq V\subseteq K\cup U_1 \subseteq M^n.
$$
On the complement of $U_1$ in $V$ we have $\nu=\mu=1$.
In any case, $V$ is the set of minimal index of nullity of $M^n$,
so its leaves of nullity are complete if $M^n$ is complete.

\medskip

The global version of \pref{b0} is the following. This in particular
applies to any immersion in cases $(c)$ and $(d)$ of \tref{a}.

\begin{main}\label{b1}
Assume that $M^n$ is compact with nonnegative sectional curvature,
and $f$ is locally wide. Furthermore, assume
that $M^n$ has no points with positive Ricci curvature, i.e.,
$M=U_1\cup K$. Let $\pi\colon\tilde V \to V$ the universal cover of $V$,
$\tilde f=f\circ\pi$ the lift of $f|_V$, and set
$\tilde{U}_1'=\pi^{-1}(U_1')\subset \tilde V$. Then we have:
\begin{itemize}
\item[$i)$] $\tilde V$ splits globally
and isometrically as a product $\tilde V = N^{n-1} \times \R$, where
$g\colon N^{n-1}\to \R^n$ is a strictly locally convex hypersurface.
In particular, $\tilde V$ is itself an Euclidean hypersurface via the
cylinder over $g$, $g\times\Id_\R:\tilde V\to\R^{n+1}$;
\item[$ii)$] The restriction $\tilde f|_{\tilde{U}_1'}$ is a
composition of the cylinder over $g|_{\tilde{U}_1'}$;
\item[$iii)$] Along each connected component $W$ of the interior of
$\tilde V\setminus \tilde U_1'$, $\tilde f|_W$ is a composition of
the cylinder over $g|_W$ with a linear inclusion
$\R^{n+1}\subset\R^{n+2}$.
\end{itemize}
\end{main}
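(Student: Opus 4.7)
The plan is to first extend the local composition structure of Proposition~\ref{b0} from the open set $U_1$ to the whole universal cover $\tilde V$, producing a global Euclidean hypersurface $g:\tilde V\to\R^{n+1}$ whose shape operator realizes $A$. Then, using completeness of the one-dimensional nullity foliation together with compactness of $M^n$, I would prove that $g$ is forced to be a cylinder over a strictly locally convex hypersurface. Parts $(ii)$ and $(iii)$ will then follow by applying the standard criteria for compositions to the two types of pieces of $\tilde V$.

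For the global hypersurface $g$, I would first note that on $V=\mu^{-1}(1)$ the nullity $\Gamma=\ker A$ is a smooth, totally geodesic, one-dimensional distribution with complete leaves, a standard fact for the set of minimal nullity of a complete Riemannian manifold. Since $\rk B\le 1$ on $V$, the Gauss equation gives $\hat R=\Lambda^2 A$, so $A$ automatically satisfies the hypersurface Gauss equation. For the hypersurface Codazzi identity, the obstruction tensor $S(X,Y)=B(w(X)Y-w(Y)X)$ from Section~3 vanishes on $U_1$ (by the proof of Proposition~\ref{b0}), vanishes trivially on the interior of $\tilde V\setminus\tilde U_1'$ where $B\equiv 0$, and vanishes on $\tilde U_1'\setminus\tilde U_1$ by a case-by-case analysis in the spirit of that proof, using $\ker A\subset\ker B$ together with the Codazzi equation for $B$ to conclude $\ker B\subset\ker w$. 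The fundamental theorem of hypersurfaces then supplies $g:\tilde V\to\R^{n+1}$ with shape operator $A$; since $\rk A=n-1$ is constant we obtain $\nu_g\equiv 1$, and positive definiteness of $A$ on $(\ker A)^\perp$ makes $g$ strictly locally convex there.

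The cylinder structure of $(i)$ reduces to showing that $\Gamma$ is parallel on $\tilde V$, equivalently that the straight lines in $\R^{n+1}$ which are the $g$-images of the nullity leaves are mutually parallel. This is the principal obstacle, and the step at which compactness of $M^n$ is essential: for complete but non-compact nonnegatively curved simply connected manifolds, non-parallel (twisted) ruled configurations are not excluded by the local equations alone, as illustrated by the regularity hypotheses in \cite{bdt}. Once parallelism of $\Gamma$ is established, the de~Rham decomposition yields the isometric splitting $\tilde V=N^{n-1}\times\R$ with the $\R$-factor tangent to $\Gamma$, and $g$ becomes the cylinder $g_0\times\Id_\R$ over a strictly locally convex hypersurface $g_0:N^{n-1}\to\R^n$, proving $(i)$.

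With $(i)$ in hand, $(ii)$ is immediate from Proposition~8 in \cite{df1}: on $\tilde U_1'$ we have $\rk B=1$ and $\ker B\subset\ker w$, which are precisely the hypotheses of that result, so $\tilde f|_{\tilde U_1'}$ is a composition of $g|_{\tilde U_1'}$ with a flat hypersurface in $\R^{n+2}$. For $(iii)$, on a connected component $W$ of the interior of $\tilde V\setminus\tilde U_1'$ we have $B\equiv 0$; inserting this into the Codazzi equation for $B$ and using $\rk A=n-1\ge 2$ forces $w\equiv 0$ on $W$, hence the normal vector field $\eta$ is parallel in $\R^{n+2}$ and therefore constant. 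Thus $\tilde f(W)$ lies in the affine hyperplane $\eta^\perp$, where by uniqueness of hypersurfaces with prescribed Gauss--Codazzi data it coincides with $g|_W$, yielding the desired composition with the linear inclusion $\R^{n+1}\subset\R^{n+2}$.
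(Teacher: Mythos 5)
Your high-level plan matches the paper's in outline, but the two steps you identify as the crux are exactly the ones you leave unproven, and there are genuine obstructions to the shortcuts you suggest.

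First, and centrally: you defer the entire proof that $\Gamma$ is parallel, writing only ``Once parallelism of $\Gamma$ is established.'' This is not a detail to be filled in later; it is the heart of the theorem, and the paper devotes the bulk of the proof to it. The paper's argument introduces the splitting tensor $C=-\nabla_{\cdot}T$ of the nullity foliation, shows it obeys the Riccati equation $C'=C^2$ along nullity geodesics, and uses completeness of these geodesics on $V$ to force all \emph{real} eigenvalues of $C$ to vanish. The crucial extra input from compactness of $M^n$ is that every nullity geodesic must enter $U_1$ (otherwise its image under $f$ is a complete straight line in $\R^{n+2}$). On $U_1$, the hypersurface Codazzi identity for $A$ already established in \pref{b0} yields $A'X=ACX$, whence $C$ is self-adjoint for the inner product $\la A\cdot,\cdot\ra$ on $\Gamma^\perp$; therefore $C$ has only real eigenvalues, and so $C\equiv 0$. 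None of this appears in your writeup. Note also that the identity $\tilde\nabla_X g'_*T=0$, which is what actually makes the $g'$-images of the nullity leaves parallel lines of $\R^{n+1}$, reads $\tilde\nabla_X g'_*T=g'_*(\nabla_XT)+\la \tilde AT,X\ra\tilde\xi=-g'_*(CX)$, so even after building $g'$ you still need $C\equiv 0$ to finish part $(i)$.

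Second, your proposed ``case-by-case analysis'' to verify the hypersurface Codazzi identity for $A$ (equivalently $\ker B\subseteq\ker w$, or $w\equiv 0$) on $\tilde U_1'\setminus\tilde U_1$ does not work as stated. The argument from \pref{b0} that $\ker B\subseteq\ker w$ on $U_1$ hinges on $\im A\cap\im B=0$, which fails on $U_1'\setminus U_1$: there $\ker A\subset\ker B$, so $\im B\subset\im A$, and the Codazzi identity $B[X,Y]=A\bigl(w(X)Y-w(Y)X\bigr)$ no longer splits by disjointness of ranges. A direct computation only yields $w(T)=0$ along the nullity direction, not the full inclusion $\ker B\subseteq\ker w$. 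The paper instead propagates $\ker B\subseteq\ker w$ from $U_1$ to all of $V$ by parallel transport along nullity geodesics: the Ricci equation gives $T(w(X))=w(\nabla_TX)$ (so $\ker w$ is parallel along $\gamma$), $B'=BC=0$ gives $\ker B$ parallel along $\gamma$, and every nullity geodesic reaches $U_1$. This again needs $C\equiv 0$ and the geodesic intersection fact. You should also address the global smoothness of the adapted normal frame $\{\xi,\eta\}$ on $V$: on $V\setminus U_1$ the relative nullity is nontrivial, so the frame is not determined pointwise; the paper obtains a globally smooth $\xi$ via parallel transport from $U_1$, again using $C\equiv 0$.

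Parts $(ii)$ and $(iii)$ are handled correctly in your sketch and agree with the paper.
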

\proof
Here we continue with the notations in the proof of \pref{b0}. Define on
$V$ the {\it splitting tensor} of the (totally geodesic and complete)
nullity distribution, $C:\Gamma^\perp \to \Gamma^\perp$ given by
$CX=-\nabla_XT$, where $T$ is a unit vector field tangent to $\Gamma$.
Since $\Gamma$ is totally geodesic, the distribution $\Gamma^\perp$ is
totally geodesic if and only if $C$ vanishes.

Observe that $C$ satisfies the Riccati type differential equation
$C'=C^2$ when restricted to a geodesic with $\gamma' = T\in\Gamma$.
Indeed, $C'X = \nabla_TCX - C \nabla_TX =  -\nabla_T\nabla_XT +
\nabla_{\nabla_TX}T = -\nabla_X\nabla_TT - \nabla_{[T,X]}T -R(T,X)T+
\nabla_{\nabla_TX}T = \nabla_{\nabla_XT}T=C^2X$ since $\nabla_TT=0$ and
$R(T,X)T=0$. Since $\gamma$ is complete, this ODE holds true over the
entire real line, and then $C(\gamma(t))=C_0(I-tC_0)^{-1}$, where
$C_0=C(\gamma(0))$. Therefore, along each nullity geodesic, all real
eigenvalues of $C$ vanish.

We claim that $C\equiv 0$. Observe first that every nullity line in $V$
has to intersect $U_1$ since in $V\setminus U_1$ we have $\nu=1$ and
hence a complete nullity line is also a relative nullity line. But then
its image under $f$ is a straight line in $\R^{n+2}$, contradicting
compactness of $M^n$. As we saw in the proof of \pref{b0}, on $U_1$
the shape operator $A$
satisfies the Codazzi equation of a hypersurface and hence
$\nabla_TAX=\nabla_XAT+A[T,X]=A[T,X]$ since on $U_1$ we also have $AT=0$.
Thus if $X\in TU_1$ and
$X\in\Gamma^\perp$ we have
\begin{equation}\label{a'}
A'X = \nabla_{T} AX - A\nabla_{T}X =A[T,X]-A\nabla_{T}X
= -A\nabla_XT = ACX.
\end{equation}
Hence, from the symmetry of $A'$, we have $A C=(A C)^t=C^tA $.
Furthermore, since $A>0$ on $\Gamma^\perp$, we have the inner product
$\la X,Y\ra_1=\la AX,Y\ra$, positive definite on $\Gamma^\perp$. But then
$C$ is self adjoint since
$\la X,CY\ra_1=\la AX,CY\ra=\la C^tAX,Y\ra=\la ACX,Y\ra=\la CX,Y\ra_1$.
Thus all eigenvalues of $C$ are real, and hence $C\equiv 0$ on $V$
which proves the claim.

In particular, $V$ is locally a Riemannian product of a line and a
manifold with positive sectional curvature. Moreover, by \eqref{a'}, $A$
is parallel along $\gamma$ in $U_1$, which implies in particular that
$\xi$ and $\eta$ are parallel along $U_1$ as well.

Now, in $V\setminus U_1$, $T$ spans the relative nullity of $f$. For any
parallel normal vector field $\sigma$ along $\gamma$ we obtain as in
\eqref{a'} that $A_\sigma'=A_\sigma C = 0$.     In particular, by taking the
parallel transport of $\xi$ we see that, along the whole $V$, there is a
unique smooth unit normal vector field $\xi$ such that $A=A_\xi\geq 0$
has rank equal to $n-1$, and for a local unit normal vector field
$\eta\perp\xi$, $B=A_\eta$ has rank at most 1. In addition, $A$ is
parallel along the complete lines of nullity in $V$, and $w(T)=0$ along
$V\setminus U_1$.

We now show that $\tilde V$ is a cylinder over a strictly convex
Euclidean hypersurface, thus proving part $(i)$.
First, the Ricci equation along $V\setminus U_1$ tells us that
$$
0=R^\perp(X,T)=T(w(X))-X(w(T))-w(\nabla_TX-\nabla_XT)
=T(w(X))-w(\nabla_TX),
$$
since $\Gamma=\Delta$ in $V\setminus U_1$.
Therefore, $\ker w$ is parallel along $\gamma$. Moreover, since $B'=BC=0$,
$\ker B$ is also parallel along $\gamma$. But in the proof of
\pref{b0} we showed that, on $U_1$, $\ker B\subset\ker w$.
We conclude that $\ker B\subset\ker w$ on the whole $V$, and, again
as in the proof of \pref{b0}, $A$ satisfies Gauss and Codazzi equations
for $V$. So, the tensor $\tilde A=\pi^*A$ satisfies Gauss and Codazzi
equations for $\tilde V$, and thus there exists an isometric immersion
$g':\tilde V\to\R^{n+1}$ with gauss map $\tilde\xi$ whose second
fundamental form is $\tilde A$. By construction, $\Gamma$ is now the
relative nullity of $g'$, and hence the complete geodesics of nullity are
mapped into complete straight affine lines of $\R^{n+1}$. On the other
hand, since
$$
\tilde \nabla_X g'_*T = \la\tilde AT,X\ra\tilde\xi = 0,
\ \ \ \forall X\in TV,
$$
for the standard connection $\tilde \nabla$ of $\R^{n+1}$, the complete
leaves of $\Gamma$ are mapped by $g'$ into locally parallel lines of
$\R^{n+1}$. Therefore, $\tilde V$ is globally a product $\tilde V =
N^{n-1}\times \R$ and $g'$ is a cylinder, i.e., $g'=g\circ\Id_\R$, where
$g:N^{n-1}\to \R^n$ is a strictly convex hypersurface, as claimed.

Finally, since $\ker B \subset \ker w$, part $(ii)$ follows from
Proposition~8 in \cite{df1} as in \pref{b0}, while part $(iii)$ is
immediate from $B=0$ and $w=0$ (i.e. $\eta$ is locally constant) on
$V\setminus U_1$.
\qed

\begin{rem}\label{1reg}
{\rm The map $h$ in \dref{comp} may fail to be an immersion along the
image of $j$ on the boundary points of $\tilde U_1'$.}
\end{rem}

The above easily implies the following corollary. Note that this applies
in particular to the immersions in part $(c)$ of \tref{a} if the metric
on $\Sph^{n-1}$ has positive sectional curvature.

\begin{cor}\label{2pos} Under the assumptions of \tref{b1}, suppose
further that $M^n$ has 2-positive Ricci curvature. Then, for the
universal cover $\pi:\tilde M^n\to M^n$ we have
$\tilde M^n=\Sph^{n-1}\times\R$, where $g:\Sph^{n-1}\to\R^n$ is a compact
strictly convex embedded hypersurface. Moreover, if $f$ is 1-regular,
then $f\circ\pi$ is globally a composition of its cylinder:
$f\circ\pi=h\circ (g\times \Id_\R)$.
\end{cor}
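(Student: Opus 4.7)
The plan is to reduce \cref{2pos} to \tref{b1} and \cref{z}. The main observation is that the 2-positive Ricci hypothesis forces the set $V=\mu^{-1}(1)$ of minimal index of nullity to coincide with the whole manifold $M^n$, so that \tref{b1} applies globally; afterwards \cref{z} identifies the fiber $N^{n-1}$ as a round sphere and Sacksteder's theorem upgrades local to global convexity.

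First I would verify that $\mu\equiv 1$ on $M^n$. Since $M^n$ has no point of positive Ricci curvature, the decomposition before \tref{b1} reads $M^n=K\cup U_1$. On $U_1$ we already recorded $\mu=1$, while on $K$ we have $\nu\ge 1$ and $\nu\le\mu$, so $\mu\ge 1$ everywhere. Conversely, if $\mu(p)\ge 2$ at some point $p$, one can pick two orthonormal vectors inside $\Gamma(p)$; each gives a zero Ricci eigenvalue, so the sum of the two smallest Ricci eigenvalues at $p$ vanishes, contradicting 2-positive Ricci. Hence $\mu\equiv 1$ and $V=M^n$.

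\tref{b1}$(i)$ now gives a global isometric splitting $\tilde M^n=N^{n-1}\times\R$ together with a strictly locally convex hypersurface $g:N^{n-1}\to\R^n$ such that $g\times\Id_\R$ realizes $\tilde M^n$ as a cylindrical hypersurface of $\R^{n+1}$. Since $\tilde M^n$ contains an isometric line it cannot be compact, so $\pi_1(M^n)$ is infinite. \cref{z} then yields $\tilde M^n=\Sph^{n-1}\times\R$ isometrically, forcing $N^{n-1}=\Sph^{n-1}$. Being a complete, strictly locally convex Euclidean hypersurface, $g$ embeds as the boundary of a strictly convex body by Sacksteder's theorem \cite{sa1}, exactly as in the proof of \cref{ric}.

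For the last assertion, assume that $f$ is 1-regular. On $V=M^n$ we already know that $A$ has rank $n-1$ and $B$ has rank at most $1$; 1-regularity means $\dim\spa\im\alpha\equiv 2$, which forces $\rk B\equiv 1$ on $M^n$. Hence $U_1'=M^n$ and $\tilde U_1'=\tilde M^n$, so \tref{b1}$(ii)$ yields that $f\circ\pi$ is a composition of the whole cylinder $g\times\Id_\R$, that is, $f\circ\pi=h\circ(g\times\Id_\R)$ for the corresponding isometric immersion $h$. The conceptual obstacle is the first step, promoting a pointwise curvature condition to the global structural statement $V=M^n$; once this is in hand, the corollary is essentially a direct invocation of \tref{b1} and \cref{z}.
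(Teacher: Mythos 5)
Your proposal is correct and follows essentially the same route as the paper: reduce to \tref{b1} by showing $V=M^n$, and then obtain $U_1'=M^n$ from 1-regularity. The paper phrases the first step more tersely (2-positive Ricci $\Leftrightarrow$ $\rk A=n-1$ everywhere, hence $V=M^n$) and leaves implicit the identification of $N^{n-1}$ with $\Sph^{n-1}$ via \cref{z} and the appeal to Sacksteder for embeddedness, both of which you correctly spell out.
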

\proof
The hypothesis on the Ricci curvature is equivalent to $\rk A=n-1$
everywhere, and hence $V=M^n$. The 1-regularity of $f$ implies that
$U_1'=V=M^n$. The result then follows from \tref{b1}.
\qed
\vspace{1.5ex}

For $n=3$, if $M^3$ is compact but not diffeomorphic to a
sphere, we have the decomposition
$$
M^3=K\cup U_1=F\cup V,
$$
where $F$ is the set of flat points of $M^3$. Indeed, in dimension 3,
$\mu(p)>1$ implies that $p\in F$.

\smallskip

Compare our next result with Examples 1 and 2 in the
introduction.

\begin{cor}\label{3}
Let $f:M^3 \to \R^5$ be an \ii of a compact manifold
with nonnegative sectional curvature. If $M^3$ has no complete geodesic
of flat points, then $M^3$ is either diffeomorphic to $\Sph^3$, or its
universal cover $\tilde M^3$ is isometric to $\Sph^2\times\R$ for some
metric of positive Gaussian curvature on the sphere $\Sph^2$.
\end{cor}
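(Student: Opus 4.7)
My plan is to invoke \tref{a} to reduce to one of three possible diffeomorphism types in dimension three (case $(b)$ requires $n\ge 4$, so is vacuous for $n=3$). Case $(a)$ already gives the desired conclusion, so I must handle cases $(c)$ and $(d)$. In both, \cref{mustbewide} guarantees that $f$ is wide, \rref{KU} gives $M^3=K\cup U_1$, and no point of $M^3$ has positive Ricci curvature; thus \tref{b1} applies, and the universal cover of the set $V=M^3\setminus M_0$ of nonflat points splits isometrically as $\tilde V=N^2\times\R$ with $N^2$ of positive Gaussian curvature. Equivalently, $V$ carries a locally defined parallel unit nullity vector field $T$ whose integral curves are geodesics that remain in $V$ for all time.

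Case $(c)$ is straightforward: by \tref{a}, $\tilde M^3=\Sph^2\times\R$ isometrically with a product metric, so the flat points of $M^3$ lift to $F\times\R$, where $F$ is the flat locus of the $\Sph^2$ factor. Any $p\in F$ yields the complete geodesic $\{p\}\times\R\subset \tilde M^3$ of flat points, whose projection to $M^3$ is a complete geodesic of flat points, contradicting the hypothesis. Hence $F=\emptyset$ and the metric on $\Sph^2$ has strictly positive Gaussian curvature, as desired.

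The main obstacle is to rule out case $(d)$ entirely. Assuming $M^3\cong \Sph^3/\Z_k$ with $k>1$, \tref{3a} gives $M_0\neq\emptyset$, so I would pick $p\in\partial V\subset M_0$ and a sequence $q_n\in V$ converging to $p$. Compactness of the unit tangent bundle, after consistent orientation choices along the sequence and passage to a subsequence, produces a limit $T_p=\lim T(q_n)\in T_pM^3$; let $\gamma_p\colon\R\to M^3$ be the geodesic with initial data $(p,T_p)$. By continuity of the geodesic flow, $\gamma_p$ is the local uniform limit of the nullity geodesics $\gamma_n$ through $q_n$ in direction $T(q_n)$, each of which remains in $V$ for all time. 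The key claim is that $\gamma_p(\R)\subset M_0$: if instead $\gamma_p(t_0)\in V$ for some $t_0$, then continuity of $T$ on $V$ together with continuity of the derivative of the geodesic flow forces $\gamma_p'(t_0)=T(\gamma_p(t_0))$, so by uniqueness of geodesics $\gamma_p$ would agree locally near $t_0$ with the integral curve of $T$ through $\gamma_p(t_0)$; since that integral curve remains in $V$ for all time, this would force $\gamma_p(0)\in V$, contradicting $p\in M_0$. Thus $\gamma_p$ is a complete geodesic of flat points, contradicting the hypothesis and ruling out case $(d)$.
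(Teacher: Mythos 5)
Your proof is correct and hinges on the same key mechanism as the paper's, though you split into the cases of \tref{a} while the paper argues uniformly. The paper's proof notes (via \cref{mustbewide} and the proof of \tref{b1}) that $V=M^3\setminus F$ is foliated by complete nullity geodesics, asserts that the same therefore holds for $\partial V$, observes $\partial V\subset F$, invokes the hypothesis to get $\partial V=\emptyset$ (hence $V=M^3$), and then applies \tref{b1}$(i)$ together with \tref{a} to identify $\tilde M^3=\Sph^2\times\R$. Your case-$(d)$ argument is precisely the justification of the paper's terse sentence ``and therefore its boundary as well'': you take a boundary point $p$, extract a limit $T_p$ of the unit nullity directions, form the geodesic $\gamma_p$, and show it cannot re-enter $V$ (else, by continuity of the nullity line field and uniqueness of geodesics, it would coincide with a nullity leaf and force $p\in V$). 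This is sound and actually makes the paper's step explicit. Your case-$(c)$ argument is also correct but is not independently needed: the same boundary-limit argument, phrased in terms of $\mu$ and $V$ rather than $M_0$, works verbatim in case $(c)$, which is exactly why the paper can treat $(c)$ and $(d)$ together. In short, your route is a correct, slightly more verbose, case-split version of the paper's proof; the paper's is more economical, while yours has the pedagogical virtue of fully spelling out the limiting-geodesic step.
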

\proof
By \cref{mustbewide}, $f$ is wide, hence locally wide. As we saw in the
proof of \tref{b1}, $V$ is then foliated by complete geodesics of
nullity, and therefore its boundary as well. But since $M^3=F\cup V$,
the boundary of $V$ is made of flat points. Therefore,
$M^3$ has no flat points, and $\mu=1$ everywhere. By part $(i)$ of
\tref{b1}, $\tilde M^3=\tilde V=N^2\times\R$ splits globally and
isometrically. That $N^2$ is a sphere is a consequence of \tref{a}.
\qed

\vspace{1.5ex}

Part $(i)$ of \tref{b1} also immediately implies the next corollary, which has
\tref{3a} as a consequence.

\begin{cor}\label{lens1}
Let $f:M^3=\Sph^3/\Z_k \to \R^5$ with $k>1$ be an isometric immersion with
nonnegative sectional curvature. Then $M^3$ has flat points, and the
complement $V$ of its flat points is isometric to a twisted cylinder
$(N^2\times\R)/\Z$, where $N^2 \subset \R^3$ is a surface with positive
Gaussian curvature.
\end{cor}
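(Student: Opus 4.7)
The plan is to reduce the statement to \tref{b1}(i) via the prior corollaries, and then descend the splitting from the universal cover $\tilde V$ down to $V$ itself to obtain the twisted cylinder structure.

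First, since $M^3 = \Sph^3/\Z_k$ with $k > 1$ is not diffeomorphic to $\Sph^3$, \cref{mustbewide} shows that $f$ is wide, hence locally wide. Being three-dimensional, $M^3$ cannot be a nontrivial Riemannian product of two spheres, so \cref{posric} rules out points of positive Ricci curvature, and \rref{KU} yields $M^3 = K \cup U_1$; in particular $\mu \geq 1$ everywhere. In dimension three the condition $\mu(p) > 1$ forces $p$ to be a flat point (as observed just before \cref{3}), so $F = \{\mu \geq 2\}$ and $V = \mu^{-1}(1) = M^3 \setminus F$.

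Next, I would verify that $F \neq \emptyset$: if $F$ were empty, then $V = M^3$, so its universal cover would be the compact space $\Sph^3$ (since $\pi_1(M^3) = \Z_k$ is finite), but \tref{b1}(i) would force that universal cover to split isometrically as $N^2 \times \R$, which is impossible. Applying \tref{b1}(i) to $V$ now produces an isometric splitting $\tilde V = N^2 \times \R$ of its universal cover, where $g \colon N^2 \to \R^3$ is strictly locally convex (hence of positive Gaussian curvature) and the $\R$-fibers are exactly the complete leaves of the nullity foliation.

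Finally --- the main obstacle --- I would identify the deck group $\Gamma$ of $\tilde V \to V$ and show it is $\Z$ acting compatibly with the splitting, so that $V \cong (N^2 \times \R)/\Z$ is a genuine twisted cylinder. The action of $\Gamma$ is free, proper, isometric, and preserves the intrinsically defined nullity foliation, so every $\gamma \in \Gamma$ decomposes as $\gamma(x,t) = (\bar\gamma(x),\, \pm t + c_\gamma)$ for some isometry $\bar\gamma$ of $N^2$. The delicate step is to show that $\Gamma$ is infinite cyclic on each connected component of $V$: one uses that $V$ is the complement of the closed flat set in the compact lens space $M^3$, so every nullity leaf of $V$ either closes up in $V$ or exits into $F$; combining this with the composition structure in \tref{b1}(ii)--(iii), and the rigidity coming from $N^2$ having positive Gaussian curvature, forces the translation part $c_\gamma$ to be nontrivial for every nontrivial $\gamma$ and pins $\Gamma$ down to be generated by a single twist along the $\R$-factor. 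Once $\Gamma \cong \Z$ is established, the quotient $\tilde V / \Gamma = (N^2 \times \R)/\Z$ is precisely the claimed twisted cylinder.
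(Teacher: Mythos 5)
Your reduction to \tref{b1}$(i)$ matches the paper's route exactly: the paper simply says the corollary ``immediately'' follows from that theorem, and the chain \cref{mustbewide} $\Rightarrow$ locally wide, \cref{posric} $\Rightarrow$ $M^3 = K\cup U_1 = F\cup V$, followed by the nonemptiness of $F$ (otherwise $\tilde V=\tilde M^3\cong\Sph^3$ would have to split off a line), is all correct and is the intended argument.

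The genuine difficulty is exactly where you flag it, but the sketch you give for it contains a misstatement and leaves the crucial mechanism unstated. You write that ``every nullity leaf of $V$ either closes up in $V$ or exits into $F$.'' That cannot happen as stated: on $V$ the index of nullity is the \emph{minimal} one, so the nullity leaves are \emph{complete} geodesics that never leave $V$ in finite time. The dichotomy you want is between closed leaves and leaves that merely \emph{accumulate} on $\partial V\subset F$, and the essential observation (not mentioned in your sketch) is that this second option is impossible for an intrinsic reason: in the local product $N^2\times\R$ the maximum sectional curvature at a point equals the Gaussian curvature $K_{N^2}$ of the base, which is strictly positive and \emph{constant along each nullity leaf}. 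Since this function extends continuously to $\bar V$ and vanishes on $F$, a nullity leaf can have no accumulation point on $F$, so each leaf has compact closure inside $V$. That compactness, combined with proper discontinuity of the deck group, is what produces a deck transformation translating along the $\R$--factor. Your appeal to ``the composition structure in \tref{b1}$(ii)$--$(iii)$'' and to unspecified ``rigidity'' is extraneous here and does not supply this step. Once the existence of a translating element is established, a little extra work (e.g.\ a Baire category argument over the fixed--point sets $\mathrm{Fix}(\bar\gamma)\subset N^2$, and using that $\pi_1(V)$ is torsion free) identifies a normal infinite cyclic subgroup acting purely by translation, and the quotient surface is then the $N^2\subset\R^3$ of the statement. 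So the architecture of your proof is right, but the pivotal claim that ``$c_\gamma$ is nontrivial for every nontrivial $\gamma$'' is asserted rather than proved, and the one idea that actually forces it --- constancy and positivity of the curvature along nullity lines versus its vanishing on $F$ --- is absent.
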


We can now use the above and \cref{lens} to compute the type numbers
$\tau_k$ for case $(d)$ in \tref{a}.

\begin{cor}\label{taus}
In the situation of \cref{lens1} and hence \tref{a} part $(d)$, the type
numbers are given by $\tau_k=\frac{1}{8\pi^2}\int_{N^2}K(x)\kappa_g(x)dx$ for
all $0\leq k \leq 3$. Here, $K$ denotes the Gaussian curvature of $N^2$ and
$\kappa_g(x)$ the total curvature of the leaf of $\Gamma$ through $x\in N^2$,
considered as a curve in $\R^5$.
\end{cor}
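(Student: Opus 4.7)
The plan is to compute $\tau_0$ directly, since by \cref{lens} all four type numbers coincide, so
\[
4\tau_0 \,=\, \sum_{k=0}^3 \tau_k \,=\, \frac{1}{\vol(\Sph^4)} \int_{T_1^\perp M} |\det A_\beta|\, dvol_{T_1^\perp M},
\]
with $\vol(\Sph^4) = 8\pi^2/3$. On the flat locus $M_0$, both $\Lambda^2 A$ and $\Lambda^2 B$ vanish, so $\rk A, \rk B \le 1$; hence $\rk A_\beta \le 2$ and $\det A_\beta \equiv 0$, so the integrand is supported on $V = M\setminus M_0$.

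On $V$, I would invoke \tref{b1}: $\tilde V = N^2 \times \R$ isometrically, $T = \partial_t$ spans the nullity $\Gamma = \ker A$, and $B$ has rank one. In an orthonormal frame $\{e_1, e_2, T\}$ diagonalizing $A|_{\Gamma^\perp}$ with eigenvalues $k_1, k_2 > 0$, expanding $\det(\cos\theta\,A + \sin\theta\,B)$ by column multilinearity collapses to a single surviving term: $\rk A = 2$ kills the all--$A$ summand, $\rk B = 1$ kills all summands with two or more columns from $B$, and $AT = 0$ kills the two remaining single--$B$--column terms in which the $B$ column lies in slot $1$ or $2$. What survives is
\[
\det A_\beta \,=\, \cos^2\theta\,\sin\theta\,\det(k_1 e_1, k_2 e_2, BT) \,=\, \cos^2\theta\,\sin\theta\cdot K(x)\cdot B_T,
\]
where $K := k_1 k_2$ is the Gaussian curvature of $N^2$ and $B_T := \la BT, T\ra \ge 0$. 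Thus $|\det A_\beta| = \cos^2\theta\,|\sin\theta|\,K\,B_T$, and $\int_0^{2\pi} \cos^2\theta\,|\sin\theta|\,d\theta = 4/3$.

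The last step is to identify $B_T$ with the curvature of the leaf image in $\R^5$: along a nullity leaf, the unit--speed curve $\gamma(t) = f\circ\pi(x,t)$ satisfies $\gamma''(t) = \alpha(T,T) = B_T\,\eta$ by the Gauss formula and $\nabla_T T = 0$, so $|\gamma''| = B_T$. Using the local product volume $dvol_V = dx\,dt$ and Fubini against the fundamental domain $N^2 \times [0,1)$ of the twisting $\Z$--action yields
\[
\int_V K\cdot B_T\,dvol_V \,=\, \int_{N^2} K(x)\,\kappa_g(x)\,dx,
\]
where $\kappa_g(x) = \int B_T\,dt$ is the total curvature of the leaf through $x$ as a closed curve in $\R^5$. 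Combining gives $4\tau_0 = \tfrac{3}{8\pi^2}\cdot\tfrac{4}{3}\int_{N^2} K\kappa_g = \tfrac{1}{2\pi^2}\int_{N^2} K\kappa_g$, hence $\tau_k = \tfrac{1}{8\pi^2}\int_{N^2} K\kappa_g$. The subtlest point is precisely this last identification: because the $\Z$--action twists via $(x,t)\mapsto(\phi(x), t+1)$ with $\phi$ an isometry of $N^2$, one needs the $\Z$--invariance of both $K$ and $B_T$ on $V$ to ensure that the length--one piece in the $t$--direction captures exactly one full period of the closed image curve, so the inner integral really equals its total curvature rather than some fractional multiple.
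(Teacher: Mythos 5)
Your argument is correct and follows essentially the same route as the paper: reduce to the nonflat locus $V$, pass to the product structure from Theorem~\ref{b1} to compute $\det A_\beta=\cos^2\theta\sin\theta\,K\,\langle BT,T\rangle$ (the paper calls the last factor $\langle BZ,Z\rangle$), integrate over the normal circle to get the factor $4/3$, identify $\langle BT,T\rangle$ with $\|\tilde\gamma_x''\|$ via $\alpha(T,T)=\langle BT,T\rangle\eta$, and then Fubini over a fundamental domain $N^2\times[0,a)$. Two small remarks: your reduction to $V=M\setminus M_0$ via $\Lambda^2A=\Lambda^2B=0$ on flat points is fine, though the paper cuts more directly to the set of zero relative nullity ($\ker A\cap\ker B=0$ already forces $\det A_\beta=0$ on its complement); and the closing worry about $\Z$--invariance of $K$ and $B_T$ is not really the issue --- those are well-defined geometric functions on $V$ --- the genuine subtlety (present in the paper too) is that the leaf through $x$ need not literally close up after time $a$ unless $j(x)=x$, so $\kappa_g(x)$ should be read, as the paper does, as the integral of the geodesic curvature of $\tilde\gamma_x$ over one fundamental period $[0,a]$ rather than as the total curvature of a necessarily closed curve.
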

\proof
Notice first that the computation of $\tau_k$ in \eqref{form} only takes
into account the normal bundle over the set with vanishing relative
nullity, which in our case is $U_1\subset V$. Writing
$\beta=\cos(\theta)\xi+\sin(\theta)\eta$, a direct computation using
$\rk A\leq 2$ and $\rk B\leq 1$ shows
that $\det A_\beta=\cos(\theta)^2\sin(\theta)\det(\hat A)\la BZ,Z\ra$ on
$V$, where $\hat A$ is the restriction of $A$ to $\Gamma^\perp=TN^2$
and $Z=\gamma_x'\in\Gamma=\ker A$ for $\gamma_x(t)=[(x,t)]$.
Along $V$ the surface $N^2$ in \cref{lens1} has $\det(\hat A)=K\geq 0$.
Furthermore, $|\la BZ,Z\ra|= \|\tilde\gamma_x''\|$ is the geodesic
curvature of $\tilde\gamma_x:=f\circ\gamma_x$ in $\R^5$ since $AZ=0$ and
hence $\tilde\gamma_x''=\alpha (\gamma',\gamma')=\la BZ,Z\ra\eta$. Thus
\eqref{form} implies that
$$
\tau_k = \frac{1}{4}\sum_{i=0}^3\tau_i=
\frac{3}{32\pi^2}\int_{T_1^\perp\!V}\abs{\det A_\beta}=
\frac{1}{8\pi^2}\int_VK(x)\|\tilde\gamma_x''(t)\|=
\frac{1}{8\pi^2}\int_{N^2}K(x)\kappa_g(x)dx.
$$
Here, $\kappa_g(x)=\int_0^a\|\tilde\gamma_x''(t)\|dt$ is the total
geodesic curvature of $\tilde\gamma_x\subset\R^5$ until the return time
$a>0$, i.e., the cyclic group $\Z$ in \cref{lens1} is spanned by
$(x,t)\mapsto(j(x),t+a)\in\Iso(N\times\R)$.
\qed
\vspace{1.5ex}

In particular, for the switched 3-sphere $M^3_\e$ in Example 2
we get $\tau_k=1$, and hence the immersion is tight.

\end{document}